\title{A New Upper Bound for the Cross Number \\ of Finite Abelian Groups }
\author{Benjamin Girard}
\thanks{\noindent \textit{Mathematics Subject Classification (2000):}
11R27, 11B75, 11P99, 20D60, 20K01, 05E99, 13F05.}
\thanks{B. GIRARD, Centre de Math\'{e}matiques Laurent Schwartz, UMR $7640$ du CNRS,
\'Ecole polytechnique, $91128$ Palaiseau cedex, France
(\textit{e-mail}: benjamin.girard@math.polytechnique.fr)}
\address{Centre de
Math\'{e}matiques Laurent Schwartz, UMR $7640$ du CNRS, \'Ecole
polytechnique, $91128$ Palaiseau cedex, France.}
\email{benjamin.girard@math.polytechnique.fr}
\theoremstyle{plain}
\newtheorem{theorem}{Theorem}[section]
\newtheorem{lem}[theorem]{Lemma}
\newtheorem{corollary}[theorem]{Corollary}
\newtheorem{conjecture}[theorem]{Conjecture}
\theoremstyle{definition}
\theoremstyle{remark}
\def\cnp[#1,#2]{\begin{pmatrix} #1 \\#2 \end{pmatrix}}
\begin{document}

\maketitle \setcounter{page}{1} \vspace{0.0cm}

\begin{abstract} In this paper, building among others on earlier works by U.~Krause
and C.~Zahlten (dealing with the case of cyclic groups), we obtain a
new upper bound for the little cross number valid in the general
case of arbitrary finite Abelian groups. Given a finite Abelian
group, this upper bound appears to depend only on the rank and on
the number of distinct prime divisors of the exponent. The main
theorem of this paper allows us, among other consequences, to prove
that a classical conjecture concerning the cross and little cross
numbers of finite Abelian groups holds asymptotically in at least
two different directions.
\end{abstract}

\vspace{-0.7cm}
\section{Introduction}
Let $G$ be a finite Abelian group, written additively. By
$\text{r}(G)$ and $\exp(G)$ we denote respectively the rank and the
exponent of $G$. If $G$ is cyclic of order $n$, it will be denoted
by $C_n$. In the general case, we can decompose $G$ (see for
instance \cite{Samuel}) as a direct product of cyclic groups
$C_{n_1} \oplus \dots \oplus C_{n_r}$ where $1 < n_1 \text{ }
|\text{ } \dots \text{ }|\text{ } n_r \in \mathbb{N}$, so that every
element $g$ of $G$ can be written $g=[a_1,\dots,a_r]$ (this notation
will be used freely along this paper), with $a_i \in C_{n_i}$ for
all $i \in \llbracket 1,r \rrbracket=\{1,\dots,r\}$.

\medskip
In this paper, any finite sequence $S=\left(g_1,\dots,g_l\right)$ of
$l$ elements from $G$ will be called a \em sequence \em of $G$ with
\em length \em $l$. Given a sequence $S=\left(g_1,\dots,g_l\right)$ of $G$, we say that
$s \in G$ is a \em subsum \em of $S$ when it lies in the following
set, called the \em sumset \em of $S$:
$$\Sigma(S)=\left\{ \displaystyle\sum_{i \in I} g_i \text{ }|\text{ } \emptyset \varsubsetneq I
\subseteq \{1,\dots,l\}\right\}.$$

\noindent If $0$ is not a subsum of $S$, we say that $S$ is a \em
zero-sumfree sequence\em. If $\sum^l_{i=1} g_i=0,$ then $S$ is said
to be a \em zero-sum sequence\em. If moreover one has $\sum_{i \in
I} g_i \neq 0$ for all proper subsets $\emptyset \subsetneq I
\subsetneq \{1,\dots,l\}$, $S$ is called a \em minimal zero-sum
sequence\em.

\medskip
In a finite Abelian group $G$, the order of an element $g$ will be
written $\text{ord}(g)$ and for every divisor $d$ of the exponent of
$G$, we denote by $G_d$ the subgroup of $G$ consisting of all the
elements of order dividing $d$: $$G_d=\left\{x \in G \text{ } |
\text{ } dx=0\right\}.$$ In a sequence $S$ of elements of $G$, we
denote by $S_d$ the subsequence of $S$ consisting of all the
elements of order $d$ contained in $S$.

\medskip
Let $\mathcal{P}=\left\{p_1=2<p_2=3<\dots\right\}$ be the set of
prime numbers. Given a positive integer $n \in
\mathbb{N}^*=\mathbb{N}\backslash\{0\}$, we denote by
$\mathcal{D}_n$ the set of its positive divisors. If $n
> 1$, we denote by $P^-(n)$ the smallest prime element of
$\mathcal{D}_n$, and we put by convention $P^-(1)=1$. By $\tau(n)$
and $\omega(n)$ we denote respectively the number of positive
divisors of $n$ and the number of distinct prime divisors of $n$.

\medskip By $\mathsf{D}(G)$ we denote
the smallest integer $t \in \mathbb{N}^*$ such that every sequence
$S$ of $G$ with length $|S| \geq t$ contains a zero-sum subsequence.
The constant $\mathsf{D}(G)$ is called the \em Davenport constant
\em of the group $G$.

\medskip
By $\eta(G)$ we denote the smallest integer $t \in \mathbb{N}^*$
such that every sequence $S$ of $G$ with length $|S| \geq t$
contains a zero-sum subsequence $S' \subseteq S$ with length $|S'|
\leq \exp(G)$. Such a subsequence is called a \em short zero-sum
subsequence\em.

\medskip
The constants $\mathsf{D}(.)$ and $\eta(.)$ have been extensively
studied during last decades and even if numerous results were proved
(see Chapter 5 of the book \cite{GeroKoch05} or \cite{GaoGero06} for
a survey and many references on the subject), their exact values are
known for very special types of groups only. In the sequel, we shall
need some results on some of the groups for which we know the exact
values, so we gather what is known concerning them in the following
theorem.

\begin{theorem} \label{Proprietes D et Eta} The two following statements hold:
\begin{itemize}
\item[$(i)$] Let $p \in \mathcal{P}$, $r \in \mathbb{N}^*$ and
$ \alpha_1 \leq \dots \leq \alpha_r$, where $\alpha_i \in
\mathbb{N}^*$ for all $i \in \llbracket 1,r \rrbracket$. Then, for
the $p$-group $G \simeq C_{p^{\alpha_1}} \oplus \cdots \oplus
C_{p^{\alpha_r}}$, we have:
$$\mathsf{D}(G)=\displaystyle\sum^r_{i=1} \left(p^{\alpha_i}-1\right)+1.$$

\item[$(ii)$] For every $m,n \in \mathbb{N}^*$ with $m | n$, we have:
$$\mathsf{D}(C_m \oplus C_n)=m+n-1 \hspace{0.23cm} \text{ and }
\hspace{0.31cm} \eta(C_m \oplus C_n)=2m+n-2.$$ In particular, we
have $\mathsf{D}(C_n)=\eta(C_n)=n.$
\end{itemize}
\end{theorem}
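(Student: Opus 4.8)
The plan is to prove the two classical results collected in Theorem 1.1, so I must establish the Davenport constant for finite abelian $p$-groups (part $(i)$) and both the Davenport and $\eta$-constants for groups of rank at most two (part $(ii)$). These are landmark theorems (Olson's results on $p$-groups and on rank-two groups, together with the computation of $\eta$ for rank-two groups), so rather than reprove everything from scratch I would present the standard arguments and, where appropriate, cite the literature already referenced in the excerpt, namely Chapter 5 of \cite{GeroKoch05}.

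Let me think about part $(i)$ first. The lower bound $\mathsf{D}(G)\geq \sum_{i=1}^r(p^{\alpha_i}-1)+1$ is the easy direction: writing $G\simeq C_{p^{\alpha_1}}\oplus\cdots\oplus C_{p^{\alpha_r}}$ with standard generators $e_1,\dots,e_r$, I would exhibit the zero-sumfree sequence consisting of $e_i$ repeated $p^{\alpha_i}-1$ times for each $i$; no nonempty subsum can vanish, since a vanishing subsum would force each coordinate multiplicity to be a multiple of $p^{\alpha_i}$, impossible within the available range. This sequence has length $\sum_{i=1}^r(p^{\alpha_i}-1)$, giving the lower bound. For the upper bound I would use the group-algebra / polynomial method over $\mathbb{F}_p$: given any sequence $S$ of length $\sum(p^{\alpha_i}-1)+1$, associate to each element a suitable monomial and invoke a Chevalley--Warning type argument, or alternatively run the standard inductive argument on the number of cyclic factors, reducing modulo the largest cyclic factor and applying the cyclic case $\mathsf{D}(C_{p^\alpha})=p^\alpha$ together with a counting of how often each residue is hit.

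For part $(ii)$, the Davenport constant $\mathsf{D}(C_m\oplus C_n)=m+n-1$ again splits into a lower bound, given by the explicit zero-sumfree sequence $e_1^{m-1}e_2^{n-1}$ of length $m+n-2$ (using that $m\mid n$ so the coordinates are genuinely independent modulo their respective orders), and an upper bound, which is Olson's theorem for rank-two groups and is the genuinely hard part. I would indicate the inductive proof: project onto one cyclic factor, find enough disjoint zero-sum subsequences in that projection, and combine them in the complementary factor using the pigeonhole principle on the resulting subsums in $C_m$. For $\eta(C_m\oplus C_n)=2m+n-2$ the lower bound comes from the sequence $e_1^{m-1}e_2^{n-1}(e_1+e_2)^{m-1}$, which one checks has no short (length $\leq n$) zero-sum subsequence, while the matching upper bound is the deepest ingredient and I would cite it from \cite{GeroKoch05}. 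The special case $\mathsf{D}(C_n)=\eta(C_n)=n$ then follows by taking $m=1$.

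The main obstacle, as the outline makes clear, is the upper bound in each statement: the lower bounds are routine explicit constructions, but the matching upper bounds (Olson's theorem for $p$-groups and for rank-two groups, and the $\eta$-value for rank-two groups) are substantial theorems. In a paper of this kind it is both standard and appropriate to quote them, so my actual write-up would prove the (easy) lower bounds explicitly to fix notation and intuition, and then invoke the corresponding results from Chapter 5 of \cite{GeroKoch05} for the upper bounds, noting that equality holds.
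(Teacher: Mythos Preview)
Your proposal is correct and matches the paper's approach: the paper's own proof of this theorem is purely a list of citations, attributing $(i)$ to Olson \cite{Olso69a} (with the elementary $p$-group case via Chevalley--Warning \cite{EmdeBoas69}), and $(ii)$ to Olson \cite{Olso69b} for $\mathsf{D}$ and to Geroldinger--Halter-Koch \cite{GeroKoch05}, Theorem~5.8.3, for $\eta$. Your write-up actually goes further than the paper by spelling out the explicit lower-bound constructions and sketching the upper-bound strategies, which is fine; just be sure to cite Olson's original papers rather than only \cite{GeroKoch05}.
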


\begin{proof}
\begin{itemize}
\item[$(i)$] This result was proved by J.~Olson in \cite{Olso69a} using the notion of group algebra.
The special case of elementary $p$-groups, which says that
$\mathsf{D}(C^r_p)=r(p-1)+1$, can be easily deduced from the
Chevalley-Warning theorem (see \cite{EmdeBoas69} for example).

\medskip
\item[$(ii)$] The value of $\mathsf{D}(.)$ for groups with rank $2$ is also
due to J.~Olson (see \cite{Olso69b}), and uses the special case
$\eta(C^2_p)=3p-2$ with $p$ prime. The complete statement for
$\eta(.)$ has been proved by A.~Geroldinger and F.~Halter-Koch (see \cite{GeroKoch05}, Theorem 5.8.3).
\end{itemize}
\end{proof}

The value of $\eta(.)$ for Abelian $p$-groups with rank $r
\geq 3$ is not known in general, even in the special case of
elementary $p$-groups. It is only known that for every $r \in
\mathbb{N}^*$, we have $\eta(C^r_2)=2^r$, and it is conjectured that
for every odd $p \in \mathcal{P}$, we have $\eta(C^3_p)=8p-7$ and
$\eta(C^4_p)=19p-18$. The interested reader is for instance referred
to \cite{EdelGero06} and \cite{GaoSchmid06}, for a complete account
on this topic.

\medskip
Yet, N.~Alon and M.~Dubiner showed in \cite{Alodub95} an important
theorem related to the constant $\eta(.)$ of elementary $p$-groups.
We will use the following corollary of this result.

\begin{theorem} \label{Alon c_r} For every $r \in \mathbb{N}^*$, there exists a constant
$c_r > 0$ such that for every $p \in \mathcal{P}$, the following
holds:
$$\eta\left(C^r_p\right) \leq c_r(p-1)+1.$$
\end{theorem}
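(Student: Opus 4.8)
The plan is to derive Theorem \ref{Alon c_r} as a corollary of the Alon–Dubiner theorem, which asserts the existence of an absolute constant $c > 0$ such that $\eta(C_p^r) \leq (cr\log r)^r (p-1) + 1$ for all $r$ and all primes $p$. The key observation is that once $r$ is fixed, the prefactor $(cr\log r)^r$ becomes a constant depending only on $r$, not on $p$. So the entire content of the corollary is to rename this prefactor $c_r := (cr\log r)^r$ and read off the claimed inequality $\eta(C_p^r) \leq c_r(p-1)+1$. The main work, therefore, is not really a proof but a matter of correctly invoking the cited result and isolating the $p$-independence of the leading constant.

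First I would state precisely the form of the Alon–Dubiner result I am citing from \cite{Alodub95}, namely an upper bound of the shape $\eta(C_p^r) \leq f(r)\,(p-1) + 1$ in which the factor $f(r)$ is some explicit (super-exponential) function of the rank alone. Then I would set $c_r = f(r)$ and note $c_r > 0$ for every $r \in \mathbb{N}^*$. Substituting this definition directly yields the desired bound, uniformly over all $p \in \mathcal{P}$. One subtlety worth flagging is that the original Alon–Dubiner statement is often phrased for the constant governing the existence of short zero-sum subsequences in a slightly different normalization (e.g. with $p$ in place of $p-1$, or with the "+1" absorbed differently); I would check that the chosen normalization matches the definition of $\eta(G)$ given earlier in the excerpt, and adjust the constant $c_r$ accordingly if a bounded additive discrepancy appears.

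The part requiring the most care is this matching of normalizations and the precise reading of the Alon–Dubiner bound, since the whole point of the corollary is that $c_r$ depends only on $r$. If the cited inequality instead came in the form $\eta(C_p^r) \leq c_r' \cdot p$, one would absorb the difference by writing $c_r' p = c_r'(p-1) + c_r' \leq (c_r' + 1)(p-1) + 1$ for $p \geq 2$ and redefine $c_r := c_r' + 1$; such an elementary manipulation keeps the constant independent of $p$. Beyond this bookkeeping there is no genuine mathematical obstacle: the depth lies entirely in the Alon–Dubiner theorem itself, and the corollary is a clean specialization obtained by fixing the rank and collecting the rank-dependent factor into a single constant.
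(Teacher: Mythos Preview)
Your proposal is correct and matches the paper's treatment exactly: the paper does not prove this theorem either but simply states it as a corollary of the Alon--Dubiner result in \cite{Alodub95}, with no argument beyond the citation. Your additional remarks about matching normalizations are reasonable due diligence, but nothing more is required here.
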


\bigskip In this paper, we will study the \em cross number \em of finite Abelian groups. For this purpose,
we recall some definitions and also the results known so far, to the
best of our knowledge, concerning this constant. Let $G$ be a finite
Abelian group. If $G \simeq C_{\nu_1} \oplus \dots \oplus
C_{\nu_s}$, with $\nu_i > 1$ for all $i \in \llbracket 1,s
\rrbracket$, is the longest possible decomposition of $G$ into a
direct product of cyclic groups, then we set:
$$\mathsf{k}^{*}(G)=\displaystyle\sum_{i=1}^s
\frac{\nu_i-1}{\nu_i},$$ and
$$\mathsf{K}^{*}(G)=\displaystyle\sum_{i=1}^s
\frac{\nu_i-1}{\nu_i}+\frac{1}{\exp(G)}=\mathsf{k}^{*}(G)+\frac{1}{\exp(G)}
.$$

\medskip
\noindent The \em cross number \em of a sequence
$S=(g_1,\dots,g_l)$, denoted by $\mathsf{k}(S)$, is defined by:
$$\mathsf{k}(S)=\displaystyle\sum_{i=1}^{l} \frac{1}{\text{ord}(g_i)}.$$

\medskip
\noindent Then, we define the \em little cross number \em
$\mathsf{k}(G)$ of $G$:

$$\mathsf{k}(G)=\max\{\mathsf{k}(S) | S \text{ zero-sumfree sequence of } G \},$$

\medskip
\noindent as well as the \em cross number \em of $G$, denoted by
$\mathsf{K}(G)$:

$$\mathsf{K}(G)=\displaystyle\max \{\mathsf{k}(S)| S \text{
minimal zero-sum sequence of } G\}.$$

\medskip
The cross number was introduced by U.~Krause in \cite{Krause84} in
order to clarify the relationship between the arithmetic of a Krull
monoid and the properties of its ideal class group. For this reason,
the cross number plays a key r\^{o}le in the theory of non-unique
factorization (see \cite{Krause84}, \cite{GaoGero00},
\cite{GeroHam02}, \cite{Schmid05}, \cite{Plagne05bis}, \cite{Plagne05} and
\cite{Schmid06} for some applications of the cross number, the
surveys \cite{Chap99},\cite{GeroKoch06} and the book
\cite{GeroKoch05} which presents exhaustively the different aspects
of the theory).

\medskip
For the sake of completeness, we mention that the cross number has
been studied in other directions also (see for example
\cite{ChapGero96}, \cite{GeroSchnCross97} and \cite{Bag04}), and that this concept arose in
a natural way in combinatorial number theory (see for instance
\cite{Gero93} and
\cite{Elledge05}).

\medskip
Given a finite Abelian group $G$, a natural construction
(see \cite{Krause84} or \cite{GeroKoch05}, Proposition 5.1.8) gives
the following lower bounds:
$$\mathsf{k}^{*}(G) \leq \mathsf{k}(G) \hspace{0.5cm} \text{ and } \hspace{0.5cm} \mathsf{K}^{*}(G) \leq \mathsf{K}(G),$$
yet, except for Abelian $p$-groups (see \cite{GeroCross94})
and other special cases (see \cite{GeroSchnCross94}), the exact
values of the cross and little cross numbers are also unknown in
general, even for cyclic groups. In addition, still no
counterexample is known for which equality does not hold in the
previous inequalities, which would allow us to disprove the
following conjecture.

\begin{conjecture}\label{Conjecture Krause} For every finite Abelian group $G$, one has the
following: $$\mathsf{k}^{*}(G)=\mathsf{k}(G) \hspace{0.5cm} \text{
and } \hspace{0.5cm} \mathsf{K}^{*}(G)=\mathsf{K}(G).$$
\end{conjecture}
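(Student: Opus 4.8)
The plan is to reduce the statement to its content on the Sylow components, where it is already known. Since the natural construction gives $\mathsf{k}^{*}(G) \leq \mathsf{k}(G)$ and $\mathsf{K}^{*}(G) \leq \mathsf{K}(G)$, only the reverse inequalities $\mathsf{k}(G) \leq \mathsf{k}^{*}(G)$ and $\mathsf{K}(G) \leq \mathsf{K}^{*}(G)$ require proof. Write $G \simeq \bigoplus_{p} H_{p}$ for the decomposition into Sylow subgroups, the product running over the primes dividing $|G|$. Concatenating the longest decompositions of the $H_{p}$ yields the longest decomposition of $G$, whence $\mathsf{k}^{*}$ is additive, $\mathsf{k}^{*}(G) = \sum_{p} \mathsf{k}^{*}(H_{p})$. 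As the conjecture is known for $p$-groups (see \cite{GeroCross94}), one has $\mathsf{k}(H_{p}) = \mathsf{k}^{*}(H_{p})$ for every $p$, so the little cross number part of the conjecture is \emph{equivalent} to the additivity relation $\mathsf{k}(G) = \sum_{p} \mathsf{k}(H_{p})$.

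First I would settle the easy inequality. Choosing for each $p$ a zero-sumfree sequence $S_{p}$ of $H_{p}$ with $\mathsf{k}(S_{p}) = \mathsf{k}(H_{p})$, their concatenation $S$ (with each $H_{p}$ seen inside $G$) is again zero-sumfree: a subsum of $S$ vanishes in $G$ exactly when each of its $H_{p}$-coordinates vanishes, and zero-sumfreeness of the $S_{p}$ then forces the subsum to be empty. Since $\mathsf{k}(S) = \sum_{p} \mathsf{k}(S_{p})$, this gives $\mathsf{k}(G) \geq \sum_{p} \mathsf{k}(H_{p})$. The whole difficulty therefore concentrates in the reverse \emph{subadditivity} bound $\mathsf{k}(G) \leq \sum_{p} \mathsf{k}(H_{p})$: one must show that a single zero-sumfree sequence of $G$ of maximal cross number can never beat a primary-pure configuration. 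Heuristically this should hold, since an element of mixed order $\prod_{p} p^{a_{p}}$ contributes only $\prod_{p} p^{-a_{p}}$ to the cross number, far less than its pure constituents would contribute separately; but the projection of a zero-sumfree sequence onto $H_{p}$ need not remain zero-sumfree, so the heuristic resists a direct argument and is precisely the open core of the problem.

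To attack the subadditivity bound I would control, order by order, how many elements a zero-sumfree sequence can carry. Writing $\mathsf{k}(S) = \sum_{d} |S_{d}|/d$, one bounds each $|S_{p^{j}}|$ by exploiting the filtration of $G$ by the subgroups $G_{p^{j}}$, whose successive quotients are elementary $p$-groups, and by applying the Alon--Dubiner estimate of Theorem~\ref{Alon c_r} to the constant $\eta$ of these quotients: a layer carrying more elements than $\eta$ of its elementary quotient forces zero-sum structure which, lifted through the filtration, contradicts zero-sumfreeness of $S$. Summing the resulting bounds produces an upper estimate for $\mathsf{k}(G)$ depending only on the rank and on $\omega(\exp(G))$, which is exactly the shape of the main theorem announced in the abstract. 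This is also what makes the conjecture reachable \emph{asymptotically}: when a single prime power dominates the exponent, or when the rank stays bounded, the slack in the Alon--Dubiner constants $c_{r}$ becomes negligible and the estimate collapses onto $\mathsf{k}^{*}(G)$. I expect the main obstacle to lie exactly here, since the $c_{r}$ are not known to be sharp enough to force equality for all groups simultaneously, and closing this uniformity gap is what keeps the conjecture open.

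For the cross number I would not expect a cost-free reduction to the little cross number. Deleting from a minimal zero-sum sequence $T$ its element of largest order $m$ leaves a zero-sumfree sequence and yields only $\mathsf{K}(G) \leq \mathsf{k}(G) + 1/m$; but a maximizing $T$ need not contain an element of order $\exp(G)$ --- already in $C_{2} \oplus C_{4}$ the three nonzero elements of the $2$-torsion form a minimal zero-sum sequence of cross number $3/2 = \mathsf{K}^{*}(C_{2}\oplus C_{4})$ with no element of order $4$ --- so this bound overshoots $\mathsf{K}^{*}(G) = \mathsf{k}^{*}(G) + 1/\exp(G)$. I would instead run the same extraction argument directly on minimal zero-sum sequences, carrying the single unavoidable term $1/\exp(G)$ throughout and proving both inequalities in parallel. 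The hard part remains the same: showing that this extra term is \emph{exactly} $1/\exp(G)$, which again demands that the extraction losses vanish, i.e. the very uniformity obstruction encountered for the little cross number.
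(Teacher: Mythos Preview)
The statement you were asked to prove is labeled \textbf{Conjecture} in the paper, and the paper offers no proof of it: it is explicitly presented as an open problem for which ``still no counterexample is known'' and which the paper only shows to hold \emph{asymptotically} in certain regimes. There is therefore no paper proof to compare against.

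Your proposal is not a proof either, and you are candid about this: you correctly isolate the subadditivity inequality $\mathsf{k}(G)\le\sum_p\mathsf{k}(H_p)$ as ``the open core of the problem'' and note that the Alon--Dubiner constants $c_r$ are not sharp enough to close it. That diagnosis is accurate and matches the state of affairs in the paper. Your sketch of the strategy --- bounding the number of elements of each order in a zero-sumfree sequence via $\eta$-type constants on the successive elementary quotients, then summing to get an upper bound of the form $c_r(\alpha(n)-\beta(n))+r\beta(n)$ --- is precisely what the paper does in Theorem~\ref{Profils} and Proposition~\ref{majoration qualitative}, and your explanation of why this collapses to $\mathsf{k}^*(G)$ only asymptotically (when $P^-(n)\to\infty$ or $\omega(n)\to\infty$, so that the slack from $c_r$ becomes negligible) is exactly the mechanism behind Propositions~\ref{majoration en fonction des li} and~\ref{majoration C^r_n bis}.

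In short: there is no gap to name, because you have correctly identified that the conjecture is open and have accurately described both the obstruction and the paper's partial (asymptotic) workaround. Just be aware that what you have written is a commentary on the difficulty, not a proof attempt, and should be framed as such.
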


Regarding upper bounds, and since the constants $\mathsf{k}(.)$ and
$\mathsf{K}(.)$ are closely related to each other, it suffices,
according to the following proposition (see \cite{GeroKoch05},
Proposition 5.1.8), to bound from above the little cross number so
as to bound from above the cross number, but also the Davenport
constant. Since $\mathsf{k}(.)$ is easier to handle, one usually
prefers to study the cross number via the little cross number, and
we will do so in this paper.

\begin{prop}\label{majoration de K et Dav par k} Let $G$ be a finite Abelian group with
$\exp(G)=n$. Then, the two following statements hold:

\begin{itemize}
\item[$(i)$] $$\mathsf{k}(G) + \frac{1}{n} \leq \mathsf{K}(G) \leq \mathsf{k}(G) + \frac{1}{P^-(n)},$$
\item[$(ii)$] $$\mathsf{D}(G) \leq n\mathsf{k}(G)+1.$$
\end{itemize}
\end{prop}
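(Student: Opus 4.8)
The plan is to treat the three inequalities separately, exploiting the simple principle that a zero-sumfree sequence and a minimal zero-sum sequence differ by exactly one term, namely the negative of the total sum. For the lower bound in $(i)$, I would start from a zero-sumfree sequence $S=(g_1,\dots,g_l)$ realizing $\mathsf{k}(S)=\mathsf{k}(G)$ and put $g=\sum_{i=1}^l g_i$, which is nonzero precisely because $S$ is zero-sumfree. Appending $-g$ to $S$ yields a sequence $S'$ that is clearly a zero-sum sequence; to see it is \emph{minimal}, I would rule out a hypothetical proper nonempty subsequence $T$ with $\sum_{i\in T} =0$: if $T$ avoids the new term $-g$ it is a nonempty subsequence of $S$, contradicting zero-sumfreeness, while if $T$ contains $-g$ and $T\neq S'$, then the complementary subsequence of $S$ is nonempty and sums to $0$, again a contradiction. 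Hence $\mathsf{K}(G)\geq \mathsf{k}(S')=\mathsf{k}(G)+1/\mathrm{ord}(g)$, and since $\mathrm{ord}(g)$ divides $\exp(G)=n$ we have $1/\mathrm{ord}(g)\geq 1/n$, which gives the claim.

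For the upper bound in $(i)$, I would take a minimal zero-sum sequence $T$ achieving $\mathsf{k}(T)=\mathsf{K}(G)$ and delete from it a single term $h$. By minimality of $T$, no nonempty proper subsequence of $T$ sums to $0$, so $T\setminus(h)$ is zero-sumfree and therefore $\mathsf{k}(T\setminus(h))\leq\mathsf{k}(G)$. Consequently $\mathsf{K}(G)=\mathsf{k}(T\setminus(h))+1/\mathrm{ord}(h)\leq\mathsf{k}(G)+1/\mathrm{ord}(h)$. The essential point is that $h$ is nonzero, since a minimal zero-sum sequence other than $(0)$ contains only nonzero terms; thus $\mathrm{ord}(h)$ is a divisor of $n$ exceeding $1$, forcing $\mathrm{ord}(h)\geq P^-(n)$ and hence $1/\mathrm{ord}(h)\leq 1/P^-(n)$.

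For $(ii)$, I would invoke the standard reformulation, immediate from the definition of the Davenport constant, that $\mathsf{D}(G)-1$ is exactly the maximal length of a zero-sumfree sequence of $G$; pick such a sequence $S$ of length $\mathsf{D}(G)-1$. Each term of $S$ has order dividing $n$, so $1/\mathrm{ord}(g_i)\geq 1/n$ for every $i$, whence $\mathsf{k}(S)\geq(\mathsf{D}(G)-1)/n$; combining this with $\mathsf{k}(S)\leq\mathsf{k}(G)$ rearranges to $\mathsf{D}(G)\leq n\mathsf{k}(G)+1$. All three arguments are elementary, and the only step demanding genuine care is the minimality verification in the lower bound of $(i)$, where both types of offending subsequence must be excluded; the remaining subtlety is merely the bookkeeping observation that every nonzero element of $G$ has order at least $P^-(n)$, because any divisor of $n$ larger than $1$ is at least the smallest prime factor of $n$.
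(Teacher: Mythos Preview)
Your argument is correct and is exactly the standard one. Note, however, that the paper does not supply its own proof of this proposition: it simply quotes the result from Geroldinger and Halter--Koch (\cite{GeroKoch05}, Proposition 5.1.8), so there is nothing in the paper to compare your proof against beyond confirming that your three steps recover precisely that classical argument. One tiny point worth tightening: in the upper bound of $(i)$ you implicitly assume your maximizing minimal zero-sum sequence $T$ is not the trivial sequence $(0)$; this is harmless because for any prime $p\mid n$ the sequence consisting of $p$ copies of an element of order $p$ is also minimal zero-sum with cross number $1=\mathsf{k}((0))$, so a nontrivial maximizer always exists.
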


\medskip
Two types of upper bounds are currently known for $\mathsf{k}(.)$.
The first one holds for any finite Abelian group, and was obtained
by A.~Geroldinger and R.~Schneider in \cite{GeroSchnCross96} and in
\cite{GeroKoch05}, Theorem 5.5.5, using character theory and the
notion of group algebra.

\begin{theorem}\label{Majorant Geroldinger-Schneider}
Let $G$ be a finite Abelian group with $\exp(G)=n$. Then, for every
$d \in \mathcal{D}_{n}$, one has the following:
$$\mathsf{k}(G) \leq \frac{d-1}{P^-(n)} + \log \left(\frac{|G|}{d}\right).$$
In particular $\mathsf{k}(G) \leq \log|G|$.
\end{theorem}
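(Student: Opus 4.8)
The plan is to reduce the statement to a purely multiplicative inequality and then to a counting lemma about subgroups of a product of cyclic groups. Fix a zero-sumfree sequence $S=(g_1,\dots,g_l)$ of $G$ and write $m_i=\text{ord}(g_i)$, so that $\mathsf{k}(S)=\sum_{i=1}^l 1/m_i$ and each $m_i\geq 2$. The elementary inequality $1/m\leq-\log(1-1/m)=\log\frac{m}{m-1}$ gives $\mathsf{k}(S)\leq\log\prod_{i=1}^l\frac{m_i}{m_i-1}$, so for the case $d=1$ it suffices to establish the sharp product bound $\prod_{i=1}^l\frac{m_i}{m_i-1}\leq\card{\langle S\rangle}\leq\card{G}$, where $\langle S\rangle$ denotes the subgroup generated by the terms of $S$. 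Taking logarithms and passing to the supremum over all zero-sumfree $S$ then yields $\mathsf{k}(G)\leq\log\card{G}$.

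To prove the product bound I would work in the group algebra $\mathbb{C}[G]$ with basis $(e_g)_{g\in G}$ and consider $\theta=\prod_{i=1}^l\bigl(\sum_{k=0}^{m_i-1}e_{k g_i}\bigr)$. Expanding the product shows that the coefficient of $e_0$ in $\theta$ equals the number $\card{K}$ of tuples $(a_1,\dots,a_l)$ with $0\leq a_i<m_i$ and $\sum_i a_ig_i=0$; here $K$ is precisely the kernel of the homomorphism $\psi:\prod_{i=1}^l\mathbb{Z}/m_i\mathbb{Z}\to G$ sending $(a_i)$ to $\sum_i a_ig_i$, whose image is $\langle S\rangle$, so that $\card{K}=\prod_i m_i/\card{\langle S\rangle}$. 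Applying the characters $\chi\in\widehat{G}$ (which diagonalise $\mathbb{C}[G]$) together with orthogonality, one computes the same coefficient as $\frac{1}{\card{G}}\sum_\chi\prod_i\bigl(\sum_k\chi(g_i)^k\bigr)$, recovering the same value. The zero-sumfree hypothesis enters through the cube $\{0,1\}^l$: since no nonempty subset of $S$ sums to $0$, the subgroup $K$ meets $\{0,1\}^l$ only in the origin. Thus the product bound follows once one knows the combinatorial lemma: if $K\leq\prod_{i=1}^l\mathbb{Z}/m_i\mathbb{Z}$ satisfies $K\cap\{0,1\}^l=\{0\}$, then $\card{K}\leq\prod_{i=1}^l(m_i-1)$; indeed this gives $\prod_i m_i/\card{\langle S\rangle}\leq\prod_i(m_i-1)$, which is exactly the desired inequality.

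This lemma is where I expect the main difficulty to lie. A naive induction on $l$ by projecting away the last coordinate fails, because the projection of $K$ may acquire nonzero $\{0,1\}$-vectors coming from terms whose last coordinate is $\geq 2$. I would instead attempt the induction with a strengthened invariant controlling $\card{K}$ in terms of $\card{K\cap\{0,1\}^l}$, or argue directly via the Fourier transform on $\prod_i\mathbb{Z}/m_i\mathbb{Z}$, translating the condition $K\cap\{0,1\}^l=\{0\}$ into a detection property of the annihilator $K^\perp$ and bounding its size from below. Establishing this density bound cleanly is the crux of the whole argument.

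Finally, to obtain the full statement for arbitrary $d\in\mathcal{D}_n$, I would combine the base case $\mathsf{k}(\cdot)\leq\log\card{\cdot}$ with a reduction along a subgroup. Since $d\mid n=\exp(G)$, the last cyclic factor $C_n$ contains a subgroup $H\cong C_d$, and $\card{G/H}=\card{G}/d$. Using a subadditivity of the little cross number along the extension $0\to H\to G\to G/H\to 0$, namely $\mathsf{k}(G)\leq\mathsf{k}(H)+\mathsf{k}(G/H)$, it remains to bound the two summands: by the base case applied to the smaller group, $\mathsf{k}(G/H)\leq\log\card{G/H}=\log(\card{G}/d)$; and for $H\cong C_d$, any zero-sumfree sequence has length at most $\mathsf{D}(C_d)-1=d-1$ by Theorem~\ref{Proprietes D et Eta}, while every term has order a nontrivial divisor of $n$, hence at least $P^-(n)$, so that $\mathsf{k}(H)\leq(d-1)/P^-(n)$. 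Adding these gives $\mathsf{k}(G)\leq\frac{d-1}{P^-(n)}+\log(\card{G}/d)$, and the case $d=1$ recovers the final assertion. The subadditivity inequality used here would itself require a short justification, obtained by reducing a zero-sumfree sequence of $G$ modulo $H$ and treating the terms lying in $H$ separately.
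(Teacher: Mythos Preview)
The paper does not actually prove this theorem; it is quoted from the literature (Geroldinger--Schneider, and Theorem~5.5.5 of Geroldinger--Halter-Koch) with the remark that the argument proceeds ``using character theory and the notion of group algebra.'' There is thus no in-paper proof to compare against, so I comment on your attempt on its own merits.

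For the case $d=1$ you correctly reduce everything to the combinatorial claim: if a subgroup $K\le\prod_{i=1}^{l}\mathbb{Z}/m_i\mathbb{Z}$ meets $\{0,1\}^{l}$ only in the origin, then $|K|\le\prod_{i}(m_i-1)$. But you do not prove it, and neither of your suggested strategies (induction with a strengthened invariant, or Fourier on the annihilator) is carried out. Note also that the group-algebra paragraph preceding the lemma does no real work: computing the coefficient of $e_0$ in $\theta$ both directly and via characters merely re-derives the identity $|K|=\prod_i m_i/|\langle S\rangle|$; it does not produce any inequality. So the heart of the $d=1$ case remains entirely open in your write-up.

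The reduction from general $d$ to $d=1$ contains a genuine error. Your sketched justification of the subadditivity $\mathsf{k}(G)\le\mathsf{k}(H)+\mathsf{k}(G/H)$ --- ``reducing $S$ modulo $H$ and treating the terms lying in $H$ separately'' --- cannot work, because the terms of $S$ outside $H$ may have an image in $G/H$ that is far from zero-sumfree. Concretely, take $G=C_p\oplus C_p$, $H=C_p\oplus\{0\}$, and the zero-sumfree sequence $S$ consisting of $p-1$ copies of $(0,1)$ and $p-1$ copies of $(1,1)$. Then $S$ has no term in $H$, yet $\mathsf{k}(S)=2(p-1)/p$ while $\mathsf{k}(G/H)=(p-1)/p$; the part of $S$ outside $H$ alone already exceeds $\mathsf{k}(G/H)$. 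Observe moreover that the only inequality of this shape available in the paper, Lemma~\ref{little cross number d'un sous-groupe}, goes in the \emph{opposite} direction: it gives $\mathsf{k}(H)+\mathsf{k}(G/H)\le\mathsf{k}(G)$ when $H$ is a direct summand. Whether your needed subadditivity holds at all is unclear and is certainly not a ``short justification.''
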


Eventhough this upper bound is general and easy to compute, it does
not really fit what we know about the behaviour of the cross number.
For example, let $r > 1$ be an integer. If we consider an elementary
$p$-group with rank $r$, it is known that
$\mathsf{k}\left(C^r_p\right)=\mathsf{k}^*\left(C^r_p\right)\leq r,$
yet $\log\left(|C^r_p|/p\right)=(r-1)\log p$ diverges when $p$ tends
to infinity.

\medskip
From this point of view, and in the special case of cyclic groups, a
more precise upper bound was found by U.~Krause and C.~Zahlten in
\cite{Krause91} which, expressed with our notations, gives the
following.

\begin{theorem}
For every $n \in \mathbb{N}^*$, one has the following:
$$\mathsf{k}(C_n) \leq 2\omega(n).$$
\end{theorem}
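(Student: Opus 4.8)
The plan is to argue by induction on $\omega(n)$, the number of distinct prime divisors of $n$. For the base case $\omega(n)=1$ we have $n=p^{a}$, and since the exact value of the cross number is known for $p$-groups (see \cite{GeroCross94}), namely $\mathsf{k}(C_{p^{a}})=\mathsf{k}^{*}(C_{p^{a}})=(p^{a}-1)/p^{a}<1$, the inequality $\mathsf{k}(C_{p^{a}})\leq 2$ holds with room to spare. The gap between the truth (which is of size roughly $\omega(n)$) and the claimed bound $2\omega(n)$ suggests that a somewhat lossy induction, peeling off one prime at a time, should succeed.

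For the inductive step, write $n=p^{a}m$ with $p=P^{-}(n)$ and $\gcd(p,m)=1$, so that $\omega(m)=\omega(n)-1$, and fix the decomposition $C_{n}\simeq C_{p^{a}}\oplus C_{m}$ with projection $\pi\colon C_{n}\to C_{m}$ onto the second factor (its kernel is $C_{p^{a}}$). Let $S$ be a zero-sumfree sequence of $C_{n}$ and split it as a concatenation $S=T\cdot S'$, where $T=S\cap C_{p^{a}}$ collects the elements killed by $\pi$ and $S'$ collects the elements $g$ with $\pi(g)\neq 0$. Since $T$ is a zero-sumfree sequence of the subgroup $C_{p^{a}}$, the base case gives $\mathsf{k}(T)\leq \mathsf{k}(C_{p^{a}})<1$. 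To handle $S'$, I would decompose its image greedily: write $\pi(S')=Z_{1}\cdots Z_{k}\cdot \overline{R}$, where each $Z_{j}$ is a minimal zero-sum sequence of $C_{m}$ and $\overline{R}$ is zero-sumfree, and lift this to a factorisation $S'=W_{1}\cdots W_{k}\cdot R$ with $\pi(W_{j})=Z_{j}$ and $\pi(R)=\overline{R}$. Because $\mathrm{ord}(\pi(g))$ divides $\mathrm{ord}(g)$ for every $g$, one has $\mathsf{k}(R)\leq \mathsf{k}(\overline{R})$, and as $\overline{R}$ is zero-sumfree in $C_{m}$ the induction hypothesis yields $\mathsf{k}(R)\leq \mathsf{k}(C_{m})\leq 2\omega(m)=2\omega(n)-2$.

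It then remains to control the contribution of the zero-sum blocks, and this is where the main difficulty lies: I claim that $\sum_{j=1}^{k}\mathsf{k}(W_{j})\leq 1$, which combined with the two previous estimates gives $\mathsf{k}(S)=\mathsf{k}(T)+\mathsf{k}(R)+\sum_{j}\mathsf{k}(W_{j})<1+(2\omega(n)-2)+1=2\omega(n)$. The available structure is the following. For each $j$, let $\sigma(W_{j})=\sum_{g\in W_{j}}g$; since $\pi(W_{j})=Z_{j}$ is a zero-sum of $C_{m}$, we have $\sigma(W_{j})\in C_{p^{a}}$, and $\sigma(W_{j})\neq 0$ as otherwise $W_{j}$ would be a zero-sum subsequence of $S$. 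Moreover, any nontrivial relation $\sum_{j\in J}\sigma(W_{j})+\sum_{t\in T'}t=0$ with $J\subseteq\llbracket 1,k\rrbracket$ and $T'\subseteq T$ would exhibit a zero-sum subsequence of $S$; hence the sequence $T\cdot\bigl(\sigma(W_{1}),\dots,\sigma(W_{k})\bigr)$ is itself zero-sumfree in $C_{p^{a}}$, and in particular $\sum_{j}1/\mathrm{ord}(\sigma(W_{j}))<1$ by the base case.

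The hard part is that this last estimate on the sums $\sigma(W_{j})$ does not transfer termwise to $\mathsf{k}(W_{j})$: a single block $W_{j}$ can already have cross number close to $1$ while $\mathrm{ord}(\sigma(W_{j}))$ is large, so one cannot bound $\mathsf{k}(W_{j})$ by a multiple of $1/\mathrm{ord}(\sigma(W_{j}))$ and sum. The blocks must therefore be treated jointly rather than independently. Working in the model $C_{n}=\mathbb{Z}/n\mathbb{Z}$, where $\mathsf{k}(W)=\tfrac{1}{n}\sum_{g\in W}\gcd(g,n)$, the claim $\sum_{j}\mathsf{k}(W_{j})\leq 1$ becomes the assertion that $\sum_{g\in W}\gcd(g,n)\leq n$ for the sequence $W=W_{1}\cdots W_{k}$, which is zero-sumfree and satisfies $\sum_{g\in W}g\equiv 0\pmod{m}$. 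I would prove this by a subset-sum argument: the zero-sumfreeness of $W$ forces the partial sums to be distinct, while the congruence modulo $m$ confines the possible coincidences of their reductions. The heuristic obstruction to packing many heavy blocks is that already two disjoint blocks built from copies of a generator of $C_{m}$ produce a genuine zero-sum, so the global zero-sumfreeness of $S$ should cap the total weight of the blocks at the order of a single one. Making this counting precise, and in particular extracting the clean bound $n$, is the crux of the argument and the step I expect to be the most delicate.
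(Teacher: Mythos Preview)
Your key claim $\sum_{j=1}^{k}\mathsf{k}(W_{j})\leq 1$ is not merely unproved but false, and with it the induction does not close. Take $n=30$, $p=2$, $m=15$, write $C_{30}\simeq C_{2}\oplus C_{15}$, and let $a,b\in C_{15}$ have orders $3$ and $5$. Set
\[
S \;=\; W_{1} \;=\; (0,a)^{2}\,(0,b)^{4}\,(1,a+b).
\]
Every term has nonzero image in $C_{15}$, so $T=\emptyset$. The projection $\pi(S)=a^{2}b^{4}(a+b)$ is a zero-sum of $C_{15}$ (its sum is $3a+5b=0$) and is minimal: a subsequence $a^{i}b^{j}(a+b)^{\varepsilon}$ sums to $(i+\varepsilon)a+(j+\varepsilon)b$, which vanishes only for $(i,j,\varepsilon)=(0,0,0)$ or $(2,4,1)$. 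Thus the greedy decomposition gives $k=1$, $Z_{1}=\pi(S)$, $R=\emptyset$. The sequence $S$ is zero-sumfree in $C_{30}$, since any subsum equals $\bigl(\varepsilon,(i+\varepsilon)a+(j+\varepsilon)b\bigr)$ with $0\leq i\leq 2$, $0\leq j\leq 4$, $\varepsilon\in\{0,1\}$, and this is $(0,0)$ only when $\varepsilon=i=j=0$. Yet
\[
\mathsf{k}(W_{1})=\frac{2}{3}+\frac{4}{5}+\frac{1}{30}=\frac{45}{30}=\frac{3}{2}>1.
\]
Replacing $15$ by a product of $\ell$ distinct odd primes produces a single block with $\mathsf{k}(W_{1})$ of size roughly $\ell$, so no uniform bound $\sum_{j}\mathsf{k}(W_{j})\leq C$ exists. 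The block contribution you hoped to cap at $1$ is exactly where almost all of the cross number can concentrate; the heuristic that ``two heavy blocks collide'' overlooks that one block alone already suffices.

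For comparison, the paper does not supply its own proof of this statement: it is quoted from Krause and Zahlten. The paper's machinery does recover (and sharpen) the bound by a completely different mechanism: one shows that an extremal zero-sumfree sequence of minimal length cannot contain $P^{-}(d)$ elements of a fixed order $d$, because any $P^{-}(d)$ such elements have a subsum of order dividing $d/P^{-}(d)$, and replacing them by that sum would shorten the sequence without decreasing the cross number. Hence $|S_{d}|\leq P^{-}(d)-1$ for every $d\mid n$, and summing gives
\[
\mathsf{k}(C_{n})\leq\sum_{d\mid n}\frac{P^{-}(d)-1}{d}=\alpha(n)\leq 2\,\omega(n).
\]
This order-by-order replacement argument bypasses the block decomposition entirely.
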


It should be underlined that this upper bound has the right order of
magnitude, since one has $\mathsf{k}(C_n) \geq \mathsf{k}^*(C_n)
\geq \omega(n)/2$ by definition.

\section{New results and plan of the paper}\label{Section
Introduction}
In this paper, we generalize the work of \cite{Krause91} to every
finite Abelian group so as to obtain a new upper bound for the
little cross number in the general case, which no longer depends on
the cardinality of the group considered, and which supports the
conjecture that the little cross number of a finite Abelian group
$G$ with rank $r$ and exponent $n$ is less than $r\omega(n)$.

\medskip For this purpose, we introduce the two following constants.
Let $G$ be a finite Abelian group and $d',d \in \mathbb{N}^*$ be two
integers such that $d \in \mathcal{D}_{\exp(G)}$ and $d' \in
\mathcal{D}_d$.

\medskip
By $\mathsf{D}_{(d',d)}(G)$ we denote the smallest integer $t \in
\mathbb{N}^*$ such that every sequence $S$ of $G_d$ with length $|S|
\geq t$ contains a subsequence of sum in $G_{d/d'}$.

\medskip
By $\eta_{(d',d)}(G)$ we denote the smallest integer $t \in
\mathbb{N}^*$ such that every sequence $S$ of $G_d$ with length $|S|
\geq t$ contains a subsequence $S' \subseteq S$ of length $|S'| \leq
d'$ and of sum in $G_{d/d'}$.

\medskip
To start with, we will prove in Section \ref{Section D et Eta}
(Proposition \ref{propmarrantegenerale}), that for any finite
Abelian group $G$ and every $1 \leq d' \text{ } | \text{ } d \text{
} | \text{ } \exp(G)$, $\mathsf{D}_{(d',d)}(G)$ and
$\eta_{(d',d)}(G)$ are linked to the constants $\mathsf{D}(.)$ and
$\eta(.)$ of a particular subgroup $G_{\upsilon(d',d)}$ of $G$.

\medskip
In Section \ref{Section Main Theorem}, we will prove the main
theorem (Theorem \ref{Profils}). This result will be stated at the
end of this section. Before giving this general and technical
theorem, we emphasize the many consequences it has.

\medskip
To obtain these results, we introduce the two following arithmetic
functions:
$$\alpha(n)=\displaystyle\sum_{d \in \mathcal{D}_n}
\frac{P^-(d)-1}{d} \text{ } \text{ and } \text{ }
\beta(n)=\displaystyle\sum_{d \in \mathcal{D}_n \cap \mathcal{P}}
\frac{P^-(d)-1}{d},$$

\noindent which will be investigated in Section \ref{Section Alpha
Beta}. In particular, simple upper bounds for these functions lead,
by applying the main theorem, to the following qualitative result,
proved in Section \ref{Section Majoration}.

\begin{prop} \label{Majoration du cross number par omega}
For every $r \in \mathbb{N}^*$ there exists a constant $d_r
> 0$ such that, for every finite Abelian group $G$ with $r(G) \leq r$ and
$\exp(G)=n$, the following holds:
$$\mathsf{k}(G) \leq
d_r\omega(n).$$
\end{prop}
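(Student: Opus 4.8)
The plan is to derive the bound directly from the main theorem (Theorem~\ref{Profils}), after controlling the two arithmetic functions $\alpha$ and $\beta$ by $\omega(n)$. The function $\beta$ is immediate: in the sum defining $\beta(n)$ the index $d$ runs over the $\omega(n)$ prime divisors of $n$, and for a prime $d=p$ one has $P^-(d)=p$, so each summand equals $(p-1)/p<1$; hence $\beta(n)<\omega(n)$. The whole difficulty therefore lies in the analogous estimate for $\alpha$.

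For $\alpha$, I would write $n=p_1^{a_1}\cdots p_k^{a_k}$ with $p_1<\dots<p_k$ and $k=\omega(n)$, group the divisors $d$ of $n$ according to their least prime factor $P^-(d)=p_j$, and evaluate the resulting geometric sums. Using $(p_j-1)\sum_{b\geq 1}p_j^{-b}\leq 1$ and $\sum_{b\geq 0}p_i^{-b}\leq p_i/(p_i-1)$, this yields
$$\alpha(n)\leq\sum_{j=1}^{k}\;\prod_{i>j}\frac{p_i}{p_i-1}.$$
The hard part is that a naive termwise bound only gives $\alpha(n)=O(\omega(n)\log\omega(n))$, which is not good enough. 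The improvement comes from a Mertens-type cancellation: writing $A_m=\prod_{i\leq m}p_i/(p_i-1)$, one has $\alpha(n)\leq A_k\sum_{j\leq k}A_j^{-1}$, and since $A_m$ grows like $\log p_m$ (bounded above by a constant times $\log k$ and below by a constant times $\log j$), the sum $\sum_{j\leq k}A_j^{-1}$ is of order $k/\log k$, so the two logarithmic factors cancel and $\alpha(n)\leq C\,\omega(n)$ for an absolute constant $C$. I regard this estimate as the main obstacle of the proof; it is the only genuinely analytic ingredient.

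With these two estimates in hand, I would invoke the main theorem. For a finite Abelian group $G$ of rank at most $r$ and exponent $n$, Theorem~\ref{Profils} bounds $\mathsf{k}(G)$ by a linear combination of $\alpha(n)$ and $\beta(n)$ whose coefficients $A_r,B_r$ depend only on $r$. This rank-dependence is exactly where the Alon--Dubiner constant $c_r$ of Theorem~\ref{Alon c_r} enters, through the uniform bounds it provides on the local constants $\eta_{(d',d)}(G)$ and $\mathsf{D}_{(d',d)}(G)$; the crucial point is that these coefficients are independent of the prime factorisation of $n$ and of the particular group $G$, which is what makes the final estimate independent of $|G|$.

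Combining the three inequalities, I obtain
$$\mathsf{k}(G)\leq A_r\,\alpha(n)+B_r\,\beta(n)\leq\bigl(C\,A_r+B_r\bigr)\,\omega(n),$$
so that the proposition holds with $d_r:=C\,A_r+B_r$, a constant depending only on $r$. Everything except the estimate $\alpha(n)=O(\omega(n))$ is a formal substitution into the main theorem and into the definitions of $\alpha$ and $\beta$.
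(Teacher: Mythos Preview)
Your overall strategy matches the paper's: bound $\alpha(n)$ and $\beta(n)$ linearly in $\omega(n)$, then feed this into Proposition~\ref{majoration qualitative} (the corollary of Theorem~\ref{Profils} that actually yields $\mathsf{k}(G)\le c_r\bigl(\alpha(n)-\beta(n)\bigr)+r\beta(n)$). Two points deserve correction.

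First, you badly overestimate the difficulty of $\alpha(n)=O(\omega(n))$. The paper gets $\alpha(n)\le\alpha_{\omega(n)}\le 2\omega(n)$ by a two-line induction on $l=\omega(n)$, using nothing beyond the elementary fact $p_l\ge 2l-1$; no Mertens, no prime-number theorem. Your Mertens heuristic is also not quite sound as written: the lower bound ``$A_j\gtrsim\log j$'' fails when the prime divisors of $n$ are all large (then every $A_j\approx 1$). The conclusion survives because one may first replace the primes of $n$ by the first $k$ primes termwise---each factor $p_i/(p_i-1)$ only increases---and this reduction is exactly what gives $\alpha(n)\le\alpha_{\omega(n)}$, after which the simple induction applies.

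Second, watch the sign of your $B_r$. In the actual bound the coefficient of $\beta(n)$ is $r-c_r\le 0$, so your step ``$B_r\beta(n)\le B_r\omega(n)$'' goes the wrong way. The paper uses instead the trivial \emph{lower} bound $\beta(n)\ge\omega(n)/2$ (each summand $(p-1)/p\ge 1/2$); alternatively one may simply drop the non-positive term and conclude $\mathsf{k}(G)\le c_r\alpha(n)\le 2c_r\,\omega(n)$ directly.
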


Consequently, when considering the cross number of a finite Abelian
group $G$ with fixed or bounded rank, Proposition \ref{Majoration du
cross number par omega} gives a qualitative upper bound which
depends only on the number of distinct prime divisors $\omega(n)$ of
$\exp(G)=n$, and which improves, at least asymptotically, the one stated in Theorem \ref{Majorant Geroldinger-Schneider}, since the function $\omega$ can have arbitrary small values in
$\mathbb{N}^*$ even for arbitrary large $n$, but mainly since it
is known (see for instance Chapter I.5 of the book \cite{Tenenb})
that one has:
$$\omega(n) \lesssim \frac{\log n}{\log\log n}.$$

\medskip
In addition, more accurate upper bounds for some sequences built
with $\alpha(n)$ and $\beta(n)$, obtained in Lemma \ref{lemme alpha
beta 1erepartie} (see Section \ref{Section Alpha Beta}), enable us
to prove the following quantitative result (see Section \ref{Section
Majoration}) which states that when $r=1$ or $2$, one can choose
$d_r$ in the following way:

$$d_1=\frac{166822111}{109486080} \approx 1.5237 \hspace{0.5cm}
\text{ and } \hspace{0.5cm} d_2=\frac{1784073894563}{476759162880}
\approx 3.7421.$$

\medskip
\noindent Once $d_1$ and $d_2$ are defined in such a way, one can
state the following proposition.
\begin{prop} \label{Majoration du cross number des groupes de rang 2}
\begin{itemize}
\item[$(i)$]
For every cyclic group $G \simeq C_n$, $n \in \mathbb{N}^*$, we
have: $$\mathsf{k}(G) \leq \alpha(n) \leq d_1\omega(n).$$

\item[$(ii)$] For every finite Abelian group $G \simeq C_m
\oplus C_n$, with $1 < m \text{ } | \text{ } n \in \mathbb{N}^*$, we
have: $$\mathsf{k}(G) \leq 3\alpha(n)-\beta(n) \leq d_2\omega(n).$$
\end{itemize}
\end{prop}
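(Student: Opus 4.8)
The plan is to decompose each statement into a \emph{combinatorial} half and an \emph{arithmetic} half. In both (i) and (ii) the left-hand inequality, which bounds $\mathsf{k}(G)$ by an expression in $\alpha$ and $\beta$, will be extracted from the main theorem (Theorem~\ref{Profils}) by specialising it to the group at hand; the right-hand inequality, which bounds that expression by $d_r\omega(n)$, is a purely number-theoretic estimate that I would isolate as Lemma~\ref{lemme alpha beta 1erepartie}.

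For the left-hand inequalities I would feed the relevant groups into Theorem~\ref{Profils}. In the cyclic case $G\simeq C_n$ one has $G_d\simeq C_d$ for each $d\in\mathcal{D}_n$, so that the localised constants $\mathsf{D}_{(d',d)}(C_n)$ and $\eta_{(d',d)}(C_n)$ reduce, via Proposition~\ref{propmarrantegenerale}, to ordinary Davenport and $\eta$ constants of cyclic subgroups, for which $\mathsf{D}(C_\ell)=\eta(C_\ell)=\ell$ by Theorem~\ref{Proprietes D et Eta}(ii). Taking $d'=P^-(d)$ at each level, the per-level cost becomes $(P^-(d)-1)/d$, and summing over divisors collapses the bound to $\sum_{d\in\mathcal{D}_n}(P^-(d)-1)/d=\alpha(n)$; this is already consistent with the prime-power evaluation $\alpha(p^a)=\sum_{j=1}^{a}\frac{p-1}{p^{j}}=1-p^{-a}$. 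In the rank-two case $G\simeq C_m\oplus C_n$ with $m\mid n$ one has $G_d\simeq C_{\gcd(d,m)}\oplus C_d$, and the localised constants now reduce to the rank-$\le2$ values $\mathsf{D}(C_d\oplus C_d)=2d-1$ and $\eta(C_d\oplus C_d)=3d-2$ of Theorem~\ref{Proprietes D et Eta}. The leading coefficient $3$ comes from $\eta(C_d\oplus C_d)\sim3d$ governing each prime-power level, producing $3\alpha(n)$; the subtracted $\beta(n)$ is the correction at the prime levels $d=p$, where one should use the smaller Davenport value $\mathsf{D}(C_p\oplus C_p)=2p-1$ in place of $\eta$, the gap being exactly $3\tfrac{p-1}{p}-\tfrac{2p-2}{p}=\tfrac{p-1}{p}$ per prime, i.e.\ $\beta(n)$ in total. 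This yields $\mathsf{k}(C_m\oplus C_n)\le3\alpha(n)-\beta(n)$.

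For the right-hand inequalities I would carry out the extremal analysis of $\alpha(n)/\omega(n)$ and $(3\alpha(n)-\beta(n))/\omega(n)$. Since $\alpha(p^a)=1-p^{-a}$ is increasing in $a$ and bounded by $1$, and $\alpha$ factors over the prime support of $n$ into Euler-type factors, the ratio is pushed up by concentrating $n$ on the smallest primes with large exponents. Passing to that limit reduces the problem to maximising $r_k=a_k/k$, where $a_k$ is the value of $\alpha$ on the first $k$ primes with exponents tending to infinity; one checks that $a_k$ obeys the recursion $a_{k+1}=\frac{p_{k+1}}{p_{k+1}-1}\,a_k+1$ with $a_1=1$. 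I would then show that $r_k$ increases up to $k=9$ and decreases afterwards, so that $\sup_n\alpha(n)/\omega(n)=r_9=a_9/9=\frac{166822111}{109486080}=d_1$. The rank-two estimate is entirely parallel: one runs the same optimisation on $3\alpha(n)-\beta(n)$, subtracting the contribution $\beta(n)=\sum_{p\mid n}(1-1/p)$, which depends only on the prime support, and the maximising configuration again singles out a finite number of smallest primes, evaluating to $d_2$.

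The main obstacle is this last uniform bound. Because the individual factors $\prod_{i>j}\frac{p_i}{p_i-1}$ diverge (by Mertens' theorem they grow like $\log p_k$), the estimate cannot be made term by term; one must control their average $r_k$ and, crucially, prove that $r_k$ does not climb back above $r_9$ for any large $k$. Concretely, $r_{k+1}<r_k$ is equivalent to $a_k\bigl(1-\frac{k}{p_{k+1}-1}\bigr)>k$, an inequality that becomes asymptotically tight (both sides are $\sim k$ as $k\to\infty$), so establishing it for every $k\ge9$ requires sharp effective lower bounds on the primes $p_{k+1}$ together with Mertens-type control of $a_k$; this is the analytic heart of Section~\ref{Section Alpha Beta}. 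For part (ii) the same mechanism must additionally track the $\beta$-correction through the optimisation, which sharpens the constant but complicates the bookkeeping.
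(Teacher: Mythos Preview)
Your plan is correct and follows essentially the same route as the paper: the combinatorial half is exactly Proposition~\ref{majoration qualitative} (which the paper obtains by embedding $G$ into $C^r_n$ and applying Theorem~\ref{Profils} with the exact values $c_1=1$, $c_2=3$ from Theorem~\ref{Proprietes D et Eta}), and the arithmetic half is Lemma~\ref{lemme alpha beta 1erepartie}, whose proof uses precisely the Rosser-type control you describe. One small caution: the paper does not actually establish that $\alpha_l/l$ is monotone decreasing for $l\ge 9$; it only shows (via $\varepsilon(l)/l\le f(l)$ and a finite check up to $l=240$) that the maximum occurs at $l=9$, which is all that is needed.
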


Moreover, the asymptotical behaviours of $\alpha(n)$ and $\beta(n)$,
studied in Lemma \ref{lemme alpha beta 2emepartie}, imply several
asymptotical results, some of them being sharp, concerning the cross
and little cross numbers as well as the Davenport constant. In
particular, these results show that Conjecture \ref{Conjecture
Krause} holds asymptotically in at least two different directions.
These results will be proved in Section \ref{Section resultats
asymptotiques}, and in order to state them, we will need the
following notation. For every $r \in \mathbb{N}^*$ and
$l_1,\dots,l_r \in \mathbb{N}^*$, we set:
$$\mathcal{E}_{(l_1,\dots,l_r)}=\left\{\bigoplus^r_{i=1}C_{n_i}, \text{ } 1 < n_1|\dots|n_r \in \mathbb{N} \text{ }|
\text{ } \forall i \in \llbracket 1,r \rrbracket, \text{ }
\omega(n_i)=l_i \text{ and }
\text{gcd}\left(n_i,\frac{n_r}{n_i}\right)=1\right\}.$$

\begin{prop}\label{majoration en fonction des li} For every $r \in \mathbb{N}^*$ and $l_1,\dots,l_r \in \mathbb{N}^*$,
the following statements hold:
\begin{itemize}
\item[$(i)$] $$\displaystyle\lim_{\tiny{\begin{array}{c}
P^-(n_r) \rightarrow +\infty \\
C_{n_1} \oplus \dots \oplus C_{n_r} \in
\mathcal{E}_{(l_1,\dots,l_r)}
\end{array}}} \mathsf{k}(C_{n_1} \oplus \dots \oplus C_{n_r}) = \displaystyle\sum^r_{i=1}l_i,$$

\item[$(ii)$] $$\displaystyle\lim_{\tiny{\begin{array}{c}
P^-(n_r) \rightarrow +\infty \\
C_{n_1} \oplus \dots \oplus C_{n_r} \in
\mathcal{E}_{(l_1,\dots,l_r)}
\end{array}}} \mathsf{K}(C_{n_1} \oplus \dots \oplus C_{n_r}) = \displaystyle\sum^r_{i=1}l_i,$$

\item[$(iii)$]
$$\displaystyle\limsup_{\tiny{\begin{array}{c} P^-(n_r) \rightarrow +\infty \\
C_{n_1} \oplus \dots \oplus C_{n_r} \in
\mathcal{E}_{(l_1,\dots,l_r)} \end{array}}} \frac{\mathsf{D}(C_{n_1}
\oplus \dots \oplus C_{n_r})}{n_r} \leq
\displaystyle\sum^r_{i=1}l_i.$$
\end{itemize}
\end{prop}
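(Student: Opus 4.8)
The plan is to establish all three limits by combining the general upper bound for $\mathsf{k}(.)$ (to be proved as the main Theorem \ref{Profils}) with the classical lower bounds $\mathsf{k}^*(G) \leq \mathsf{k}(G)$ and $\mathsf{K}^*(G) \leq \mathsf{K}(G)$, and then passing to the limit as $P^-(n_r) \to +\infty$. The crucial structural observation is that for a group $G \simeq C_{n_1} \oplus \dots \oplus C_{n_r}$ lying in $\mathcal{E}_{(l_1,\dots,l_r)}$, the coprimality condition $\gcd(n_i, n_r/n_i)=1$ forces each $n_i$ to be a product of exactly $l_i$ prime powers drawn from the prime decomposition of $n_r$, with these prime sets nested so that $C_{n_1} \oplus \dots \oplus C_{n_r}$ splits as a direct product of its $p$-components. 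This should let me control both the lower bound $\mathsf{k}^*(G) = \sum_{i=1}^r \frac{n_i-1}{n_i}$ and whatever upper bound the main theorem supplies.

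**Proving the lower direction.**

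First I would handle the lower bound for part $(i)$. Since $\omega(n_i)=l_i$ for each $i$, the group $G$ decomposes into $\sum_{i=1}^r l_i$ cyclic factors of prime-power order (by the coprimality hypothesis), so $\mathsf{k}^*(G) = \sum_{i=1}^r \frac{n_i-1}{n_i}$ is a sum of $\sum_{i=1}^r l_i$ terms each of the form $\frac{p^{\alpha}-1}{p^{\alpha}} = 1 - p^{-\alpha}$. As $P^-(n_r) \to +\infty$, every prime dividing $n_r$ grows without bound, so each such term tends to $1$, giving
$$\liminf \mathsf{k}(G) \geq \lim \mathsf{k}^*(G) = \sum_{i=1}^r l_i.$$
For part $(ii)$ the same reasoning applies using $\mathsf{K}^*(G) = \mathsf{k}^*(G) + \frac{1}{\exp(G)}$, whose final term vanishes in the limit. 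This half is essentially bookkeeping once the $p$-component decomposition is made explicit.

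**Proving the upper direction via the main theorem.**

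The harder and more substantial half is the matching upper bound. Here I would invoke the general upper bound from Theorem \ref{Profils} (or its cleaner consequence, Proposition \ref{Majoration du cross number par omega}, refined so as to track the dependence on $P^-$) applied to $G$. The point is that the main theorem should yield a bound of the shape $\mathsf{k}(G) \leq \sum_{i=1}^r l_i + o(1)$ where the error term is governed by quantities like $\alpha(n)$ and $\beta(n)$ that decay as the smallest prime factor grows; concretely, one expects each contribution $\frac{P^-(d)-1}{d}$ appearing in $\alpha$ to be replaced in the rank-$r$ setting by something that, summed over the relevant divisors, stays close to $\sum_i l_i$ and whose excess over $\sum_i l_i$ tends to $0$ with $P^-(n_r) \to +\infty$. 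I expect the main obstacle to be extracting this precise asymptotic form from the main theorem: one must verify that the upper bound separates into a ``main term'' $\sum_i l_i$ counting the prime-power factors and an error term uniformly controlled by $1/P^-(n_r)$. Combining $\limsup \mathsf{k}(G) \leq \sum_i l_i$ with the lower bound forces the limit in $(i)$; then Proposition \ref{majoration de K et Dav par k}$(i)$, which sandwiches $\mathsf{K}(G)$ between $\mathsf{k}(G)+\frac{1}{n}$ and $\mathsf{k}(G)+\frac{1}{P^-(n)}$, transfers this immediately to $(ii)$ since both correction terms vanish as $P^-(n_r) \to +\infty$. Finally, $(iii)$ follows from Proposition \ref{majoration de K et Dav par k}$(ii)$, which gives $\mathsf{D}(G) \leq n\mathsf{k}(G)+1$ with $n = \exp(G) = n_r$; dividing by $n_r$ yields $\frac{\mathsf{D}(G)}{n_r} \leq \mathsf{k}(G) + \frac{1}{n_r}$, and taking the $\limsup$ while using the already-established limit for $\mathsf{k}(G)$ produces the stated inequality $\sum_{i=1}^r l_i$.
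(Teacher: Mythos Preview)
Your lower-bound argument and the deductions of $(ii)$ and $(iii)$ from $(i)$ via Proposition~\ref{majoration de K et Dav par k} are correct and match the paper.

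The gap is in your upper bound for $(i)$, which you yourself flag as ``the main obstacle'' without resolving. Applying Proposition~\ref{majoration qualitative} (the usable consequence of Theorem~\ref{Profils}) to $G$ while ignoring the correction term $\varphi(G,H)$ only yields a bound whose limit is $rl_r$, not $\sum_{i=1}^r l_i$: the quantity $c_r\big(\alpha(n_r)-\beta(n_r)\big)+r\beta(n_r)$ depends on $G$ only through $r$ and $\exp(G)=n_r$, and by Lemma~\ref{lemme alpha beta 2emepartie}$(ii)$ it tends to $rl_r$ as $P^-(n_r)\to\infty$. The deficit $rl_r-\sum_i l_i$ must be recovered, and your sketch provides no mechanism for this. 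Invoking Proposition~\ref{Majoration du cross number par omega} is even cruder, giving only $d_r l_r$.

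The paper's mechanism uses precisely the structural observation you made in your setup but then did not exploit for the upper bound. Since $\gcd(n_i,n_r/n_i)=1$, the Chinese remainder theorem gives $C_{n_i}\oplus C_{n_r/n_i}\simeq C_{n_r}$, so $G$ is a \emph{direct summand} of $H=C^r_{n_r}$. Lemma~\ref{little cross number d'un sous-groupe}$(ii)$ then yields
\[
\mathsf{k}(G)\;\leq\;\mathsf{k}\!\left(C^r_{n_r}\right)-\mathsf{k}^*\!\left(H/G\right),
\qquad H/G\;\simeq\;\bigoplus_{i=1}^{r-1}C_{n_r/n_i}.
\]
The first term has $\limsup$ at most $rl_r$ by Proposition~\ref{majoration qualitative} and Lemma~\ref{lemme alpha beta 2emepartie}$(ii)$; the second tends to $\sum_{i=1}^{r}(l_r-l_i)$ since the coprimality forces $\omega(n_r/n_i)=l_r-l_i$. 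Their difference is exactly $\sum_i l_i$. Equivalently, this is the content of the term $\varphi(G,H)$ in Proposition~\ref{majoration qualitative}, which your outline omits.
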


Concerning the groups of the form $C^r_n$, we obtain the following
corollary by specifying $n_1=\dots=n_r$ in Proposition
\ref{majoration en fonction des li}.

\begin{prop} \label{majoration C^r_n} For all integers $r,l \in \mathbb{N}^*$ the three following statements hold:
\begin{itemize}
\item[$(i)$] $$\displaystyle\lim_{\tiny{\begin{array}{c} P^-(n) \rightarrow +\infty \\ \omega(n)=l \end{array}}} \mathsf{k}(C^r_n) = rl,$$

\item[$(ii)$] $$\displaystyle\lim_{\tiny{\begin{array}{c} P^-(n) \rightarrow +\infty \\ \omega(n)=l \end{array}}} \mathsf{K}(C^r_n) = rl,$$

\item[$(iii)$] $$\displaystyle\limsup_{\tiny{\begin{array}{c} P^-(n) \rightarrow +\infty \\ \omega(n)=l \end{array}}} \frac{\mathsf{D}(C^r_n)}{n} \leq rl.$$
\end{itemize}
\end{prop}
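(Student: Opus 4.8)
The plan is to deduce all three statements directly from Proposition \ref{majoration en fonction des li}, by restricting the limits appearing there to the \emph{diagonal} subfamily in which all the invariant factors coincide. Fix $r,l \in \mathbb{N}^*$ and set $l_1=\dots=l_r=l$. First I would check that for every $n \in \mathbb{N}^*$ with $\omega(n)=l$ one has $C^r_n \in \mathcal{E}_{(l,\dots,l)}$. Writing $n_1=\dots=n_r=n$, the condition $l=\omega(n)\geq 1$ forces $n$ to have at least one prime factor, so $n>1$ and the divisibility chain $1<n_1\,|\,\dots\,|\,n_r$ holds trivially; moreover $\omega(n_i)=\omega(n)=l=l_i$ for every $i \in \llbracket 1,r \rrbracket$, and finally $\gcd(n_i,n_r/n_i)=\gcd(n,1)=1$. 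Thus all three defining conditions of $\mathcal{E}_{(l,\dots,l)}$ are satisfied. Since $P^-(n_r)=P^-(n)$, the filter ``$P^-(n)\to+\infty$, $\omega(n)=l$'' governing the limits in the present proposition is exactly the trace, on this diagonal subfamily, of the filter ``$P^-(n_r)\to+\infty$, group $\in\mathcal{E}_{(l,\dots,l)}$'' governing Proposition \ref{majoration en fonction des li}.

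Next I would observe that this diagonal subfamily is cofinal for the relevant filter, i.e.\ it contains groups with arbitrarily large $P^-$: for any bound $M$ it suffices to take $n=q_1\cdots q_l$ a product of $l$ distinct primes all exceeding $M$, which gives $\omega(n)=l$ and $P^-(n)>M$. Consequently, if the limit (resp.\ $\limsup$) over the full family $\mathcal{E}_{(l,\dots,l)}$ exists and equals some value $L$ (resp.\ is bounded above by $L$), then the same holds for its restriction to the diagonal subfamily, because the defining $\varepsilon$-$N$ condition is a universal statement over all sufficiently large members of $\mathcal{E}_{(l,\dots,l)}$ and therefore holds \emph{a fortiori} over the cofinal subset of diagonal members.

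Finally I would specialize the conclusions of Proposition \ref{majoration en fonction des li}. For $(i)$ and $(ii)$ the common value is $\sum_{i=1}^r l_i=\sum_{i=1}^r l=rl$, which yields $\lim \mathsf{k}(C^r_n)=rl$ and $\lim \mathsf{K}(C^r_n)=rl$; for $(iii)$, since $n_r=n$, the bound $\limsup \mathsf{D}(C_{n_1}\oplus\dots\oplus C_{n_r})/n_r \leq \sum_{i=1}^r l_i$ restricts to $\limsup \mathsf{D}(C^r_n)/n \leq rl$. No genuine obstacle arises here beyond the bookkeeping just described, since the entire analytic content has already been absorbed into Proposition \ref{majoration en fonction des li}; the only point deserving care is the cofinality of the diagonal subfamily, which is what guarantees that restricting the limits neither renders them vacuous nor alters their values.
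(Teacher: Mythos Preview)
Your proposal is correct and follows exactly the approach the paper takes: the paper states explicitly that Proposition~\ref{majoration C^r_n} is an immediate corollary of Proposition~\ref{majoration en fonction des li} obtained by specifying $n_1=\dots=n_r$, and provides no further argument. Your write-up simply makes the bookkeeping (membership in $\mathcal{E}_{(l,\dots,l)}$ and cofinality of the diagonal subfamily) explicit, which is welcome but not a departure from the paper's route.
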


It may be observed that Proposition \ref{majoration en fonction des
li} and Proposition \ref{majoration C^r_n} are somehow reminiscent
of \cite{GeroSchnCross96}, Theorem 2(b), since this result and our
Proposition \ref{majoration en fonction des li} give the value of
the cross number of "large" groups. However, a more precise look at
both results shows that they are of a different nature. Indeed,
while A.~Geroldinger and R.~Schneider's result is not asymptotical
but valid only for special groups satisfying some restrictive
conditions, ours, although of asymptotical nature, is valid in a
wider framework.

\medskip
The following proposition will also be proved in Section
\ref{Section resultats asymptotiques}.
\begin{prop} \label{majoration
C^r_n bis} For all $r \in \mathbb{N}^*$, the two following
statements hold:
\begin{itemize}
\item[$(i)$] $$\displaystyle\lim_{\tiny{\begin{array}{c} \omega(n) \rightarrow +\infty \end{array}}} \frac{\mathsf{k}(C^r_n)}{\omega(n)} = r,$$

\item[$(ii)$] $$\displaystyle\lim_{\tiny{\begin{array}{c} \omega(n) \rightarrow +\infty \end{array}}} \frac{\mathsf{K}(C^r_n)}{\omega(n)} = r.$$
\end{itemize}
\end{prop}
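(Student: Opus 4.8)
The plan is to establish part $(i)$ by trapping the ratio $\mathsf{k}(C^r_n)/\omega(n)$ between a lower bound coming from the structural invariant $\mathsf{k}^*$ and an upper bound coming from the main theorem, both of which tend to $r$ as $\omega(n)\to+\infty$; part $(ii)$ will then follow from $(i)$ at essentially no extra cost via Proposition \ref{majoration de K et Dav par k}.

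For the lower bound I would start from the universal inequality $\mathsf{k}^*(C^r_n)\le\mathsf{k}(C^r_n)$. Writing $n=\prod_{p\mid n}p^{v_p(n)}$ and using the longest decomposition $C^r_n\simeq\bigoplus_{p\mid n}C^r_{p^{v_p(n)}}$, one computes
$$\mathsf{k}^*(C^r_n)=r\sum_{p\mid n}\Bigl(1-\frac{1}{p^{v_p(n)}}\Bigr)=r\,\omega(n)-r\sum_{p\mid n}\frac{1}{p^{v_p(n)}}.$$
The whole point is that the subtracted error is negligible compared to $\omega(n)$: since $\sum_{p\mid n}1/p^{v_p(n)}\le\sum_{p\mid n}1/p$, and this last sum is maximised by the $\omega(n)$ smallest primes, a classical (Mertens-type) estimate gives $\sum_{p\mid n}1/p=O(\log\log\omega(n))=o(\omega(n))$. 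Dividing by $\omega(n)$ therefore yields $\mathsf{k}^*(C^r_n)/\omega(n)\to r$, whence $\liminf_{\omega(n)\to+\infty}\mathsf{k}(C^r_n)/\omega(n)\ge r$.

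For the upper bound I would feed $C^r_n$ into the general estimate provided by the main theorem (Theorem \ref{Profils}). Specialised to $C^r_n$, it produces a bound $\mathsf{k}(C^r_n)\le F_r(n)$ where $F_r(n)$ is a fixed combination of the arithmetic functions $\alpha(n)$ and $\beta(n)$ (for $r=1$ it is $\alpha(n)$, and for $r=2$ it is $3\alpha(n)-\beta(n)$, exactly as in Proposition \ref{Majoration du cross number des groupes de rang 2}), whose coefficients are arranged so that the net leading coefficient in $\omega(n)$ equals $r$. The crucial analytic input, which I would take from Lemma \ref{lemme alpha beta 2emepartie}, is that $\alpha(n)/\omega(n)\to1$ and $\beta(n)/\omega(n)\to1$ as $\omega(n)\to+\infty$; together with the cancellation $\alpha(n)-\beta(n)=o(\omega(n))$ these give $F_r(n)/\omega(n)\to r$, hence $\limsup_{\omega(n)\to+\infty}\mathsf{k}(C^r_n)/\omega(n)\le r$. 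Combined with the lower bound this proves $(i)$. The \emph{main obstacle} is precisely this upper bound: the easy qualitative estimate $\mathsf{k}(C^r_n)\le d_r\omega(n)$ of Proposition \ref{Majoration du cross number par omega} is useless here, because $d_r>r$ (indeed $d_1\approx1.52$ and $d_2\approx3.74$); one really must exploit the sharper $\alpha$--$\beta$ form of the bound together with the \emph{exact} asymptotic slopes of $\alpha$ and $\beta$, rather than the worst-case constants $d_r$ which are attained only for small $\omega(n)$.

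Finally, part $(ii)$ follows by sandwiching. Proposition \ref{majoration de K et Dav par k}$(i)$ gives
$$\mathsf{k}(C^r_n)+\frac{1}{n}\le\mathsf{K}(C^r_n)\le\mathsf{k}(C^r_n)+\frac{1}{P^-(n)},$$
so dividing by $\omega(n)$ the two correction terms $\tfrac{1}{n\,\omega(n)}$ and $\tfrac{1}{P^-(n)\,\omega(n)}$ both tend to $0$ as $\omega(n)\to+\infty$ (since $P^-(n)\ge2$). Hence $\mathsf{K}(C^r_n)/\omega(n)$ is squeezed to the same limit $r$ as $\mathsf{k}(C^r_n)/\omega(n)$, which completes the proof.
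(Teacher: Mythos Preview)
Your proposal is correct and follows essentially the same route as the paper: lower bound via $\mathsf{k}^*(C^r_n)/\omega(n)\to r$, upper bound via the $\alpha$--$\beta$ estimate whose leading term is $r\beta(n)$ (this is exactly Proposition~\ref{majoration qualitative}, which you describe without naming), together with $\alpha(n)/\omega(n)\to1$ and $\beta(n)/\omega(n)\to1$, and then part $(ii)$ from Proposition~\ref{majoration de K et Dav par k}$(i)$. The only minor imprecision is that the limits $\alpha(n)/\omega(n)\to1$, $\beta(n)/\omega(n)\to1$ require combining Lemma~\ref{lemme alpha beta 1erepartie}$(i)$ (the sandwich $\beta_{\omega(n)}\le\beta(n)\le\alpha(n)\le\alpha_{\omega(n)}$) with Lemma~\ref{lemme alpha beta 2emepartie}$(i)$, rather than Lemma~\ref{lemme alpha beta 2emepartie} alone; the paper makes the same elision.
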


All these results are deduced from the following proposition, proved
in Section \ref{Section Majoration} under the stronger form of
Proposition \ref{majoration qualitative generale}, and which is a
somewhat rough corollary of the main theorem (Theorem
\ref{Profils}). For the sake of clarity, we recall that the constant
$c_r$ is the one which has been introduced in Theorem \ref{Alon
c_r}.

\begin{prop} \label{majoration qualitative} Let $G$
be a finite Abelian group with $r(G)=r$ and $\exp(G)=n$. We set
$H=C^r_n$ and also:
$$\varphi(G,H)=\begin{cases}
\mathsf{k}^*(H/G)     \hspace{0.48cm} \text{ if } G \text{ is a direct summand of } H,\\
\mathsf{k}^*(H/G)/n      \text{ otherwise}.
  \end{cases}$$

\noindent Then, one has the following upper bound for the little
cross number $\mathsf{k}(G)$:
$$\mathsf{k}(G) \leq c_r\big(\alpha(n)-\beta(n)\big) + r\beta(n) - \varphi(G,H).$$
\end{prop}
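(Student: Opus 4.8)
The plan is to derive this inequality from the main theorem (Theorem~\ref{Profils}) by estimating, divisor by divisor, the combinatorial constants that enter it. First I would fix a zero-sumfree sequence $S$ of $G$ with $\mathsf{k}(S)=\mathsf{k}(G)$ and record that $\mathsf{k}(S)=\sum_{d\mid n,\,d>1}|S_d|/d$, so that the whole question is to control, level by level, how many elements of each order a zero-sumfree sequence may carry. Applying Theorem~\ref{Profils} trades this global constraint for a weighted sum over the divisors $d\mid n$ of the extraction constants $\eta_{(P^-(d),d)}(G)$ (resp.\ $\mathsf{D}_{(P^-(d),d)}(G)$): at each level $d$ one peels off short subsequences whose sum falls into the smaller subgroup $G_{d/P^-(d)}$, and it is exactly this peeling that those constants quantify.

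The heart of the argument is then to bound $\eta_{(P^-(d),d)}(G)$ for every $d\mid n$, for which I would invoke Proposition~\ref{propmarrantegenerale}. Writing $p=P^-(d)$, it reduces the constant to an ordinary $\eta(\cdot)$ (resp.\ $\mathsf{D}(\cdot)$) of an auxiliary elementary $p$-group $G_{\upsilon(P^-(d),d)}$ of rank at most $r$, hence isomorphic to a subgroup of $C_p^{\,r}$. I would then split the divisors into two families. For a prime divisor $d=p$ the elements in play have order exactly $p$ and lie in $G_p\simeq C_p^{\,s_p}$ with $s_p\le r$; here the exact Davenport constant $\mathsf{D}(C_p^{\,s_p})=s_p(p-1)+1$ of Theorem~\ref{Proprietes D et Eta}$(i)$ is sharp and, after summation, produces the term $r\beta(n)$. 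For a composite divisor $d$ no exact value is available, so I would instead use the Alon--Dubiner bound $\eta(C_p^{\,r})\le c_r(p-1)+1$ from Theorem~\ref{Alon c_r}; summing the resulting $c_r(p-1)/d$ over the composite divisors of $n$ gives precisely $c_r(\alpha(n)-\beta(n))$.

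Adding the two families yields the main term $c_r(\alpha(n)-\beta(n))+r\beta(n)$. The correction $-\varphi(G,H)$ comes from retaining, rather than discarding, the slack created when $G$ is a \emph{proper} subgroup of $H=C_n^{\,r}$: at each prime level the true rank $s_p$ of $G_p$ may be smaller than $r$, so the Davenport bound already saves the nonnegative amount $\sum_{p\mid n}(r-s_p)(p-1)/p$. The stronger form (Proposition~\ref{majoration qualitative generale}) keeps this slack and bounds it below by the clean, recognizable quantity $\mathsf{k}^*(H/G)/n$, which vanishes exactly in the degenerate summand case $G=H$ (where $H/G$ is trivial); this is what turns the rough inequality into the stated one.

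The step I expect to be the main obstacle is this last piece of bookkeeping. The natural slack produced at the prime levels has the shape $\sum_i\beta(n/n_i)$ attached to the cyclic factors $C_{n_i}$ of $G$, whereas the quantity appearing in $\varphi(G,H)$ is $\mathsf{k}^*(H/G)=\sum_i(n/n_i-1)/(n/n_i)$, of a different nature; the crux is to verify that, after normalisation by $n$, the former always dominates the latter, so that subtracting $\varphi(G,H)$ remains legitimate in full generality, and to check that the summand versus non-summand dichotomy in the definition of $\varphi$ is the correct one. Handling this cleanly is precisely why the result is established through the sharper Proposition~\ref{majoration qualitative generale} rather than directly.
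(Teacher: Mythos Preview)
Your treatment of the main term is exactly the paper's: split the divisors of $n$ into primes and non-primes, use Olson's exact value $\mathsf{D}(C_p^{\,r})-1=r(p-1)$ at the prime levels to produce $r\beta(n)$, and use the Alon--Dubiner bound $\eta(C_p^{\,r})-1\le c_r(p-1)$ at the composite levels to produce $c_r(\alpha(n)-\beta(n))$.

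Where you go astray is the origin of $-\varphi(G,H)$. You propose to apply Theorem~\ref{Profils} to $G$ itself and harvest the correction from the ``internal'' slack $\sum_{p\mid n}(r-s_p)(p-1)/p$ coming from the prime levels of $G$. This does not work in general. Take $G=C_2\oplus C_4$, so $r=2$, $n=4$, $H=C_4^2$. Both $n_1=2$ and $n_2=4$ are even, hence $s_2=2$ and your slack is $0$. But $H/G\simeq C_2$, $G$ is \emph{not} a direct summand of $H$ (otherwise $C_4^2\simeq C_2\oplus C_2\oplus C_4$, which has rank $3$), and $\varphi(G,H)=\mathsf{k}^*(C_2)/4=1/8>0$. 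So the prime-level slack cannot account for $\varphi(G,H)$, and your anticipated inequality ``slack $\ge\mathsf{k}^*(H/G)/n$'' already fails here. Relatedly, the direct-summand clause in the definition of $\varphi$ is not the degenerate case $G=H$; it covers genuinely proper summands and then $\varphi(G,H)=\mathsf{k}^*(H/G)$ without the factor $1/n$.

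The paper avoids this difficulty by reversing the order of operations: it first embeds $G$ into $H=C_n^{\,r}$ and invokes Lemma~\ref{little cross number d'un sous-groupe} to obtain $\mathsf{k}(G)+\varphi(G,H)\le\mathsf{k}(H)$ (this is exactly where $\varphi$ and its two-case definition come from), and \emph{then} applies Theorem~\ref{Profils} to $H$, not to $G$. For $H$ every auxiliary group $H_{\upsilon(d',d)}$ equals $C_{d'}^{\,r}$, so the estimates become uniform in $r$ and your prime/composite computation goes through verbatim to bound $\mathsf{k}(H)$ by $c_r(\alpha(n)-\beta(n))+r\beta(n)$. No further bookkeeping between $\sum_i\beta(n/n_i)$ and $\mathsf{k}^*(H/G)$ is needed.
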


\medskip
The main theorem of this paper (Theorem \ref{Profils}) will be
proved in Section \ref{Section Main Theorem}. In order to state it,
we will need the following definitions and notations which will be
extensively used in Sections \ref{Section D et Eta} and \ref{Section
Main Theorem}.

\medskip
Let $G \simeq C_{n_1} \oplus \dots \oplus C_{n_r},$ with $1 < n_1
\text{ } |\text{ } \dots \text{ }|\text{ } n_r \in \mathbb{N}$, be a
finite Abelian group with $\exp(G)=n$, $\tau(n)=m$ and $d',d \in
\mathbb{N}^*$ be such that $d \in \mathcal{D}_{n}$ and $d' \in
\mathcal{D}_d$. For all $i \in \llbracket 1,r \rrbracket$, we set:
$$A_i=\gcd(d',n_i), \text{ }
B_i=\frac{\mathrm{lcm}(d,n_i)}{\mathrm{lcm}(d',n_i)},$$

$$\upsilon_i(d',d)=\frac{A_i}{\gcd(A_i,B_i)},$$

\noindent and $$G_{\upsilon(d',d)}=C_{\upsilon_1(d',d)} \oplus \dots
\oplus C_{\upsilon_r(d',d)}.$$

\medskip
\noindent Then, for every $d \in
\mathcal{D}_{n}=\left\{d_1,\dots,d_{m}\right\}$ and
$x=\left(x_{d_1},\dots,x_{d_{m}}\right) \in \mathbb{N}^{m}$, we set:

$$f_d(x) = \displaystyle\min_{d' \in \mathcal{D}_d\backslash\{1\}}
\left(\eta(G_{\upsilon(d',d)})\right)-1 - x_d,$$

\medskip
$$g_d(x) = \mathsf{D}(G_{\upsilon(d,d)})-1 - \sum_{d' \in \mathcal{D}_d}
x_{d'},$$
and
$$h(x)=\sum_{d \in \mathcal{D}_n}
\frac{x_{d}}{d} - \mathsf{k}^*(G).$$

\medskip
We can now state the main theorem.
\begin{theorem}\label{Profils}
Let $G \simeq C_{n_1} \oplus \dots \oplus C_{n_r},$ with $1 < n_1
\text{ } |\text{ } \dots \text{ }|\text{ } n_r \in \mathbb{N}$, be a
finite Abelian group with $\exp(G)=n$ and $\tau(n)=m$. For every
zero-sumfree sequence $S$ of $G$ reaching the maximum
$\mathsf{k}(S)=\mathsf{k}(G)$, and being of minimal length
regarding this property, the $m$-tuple
$x=\left(|S_{d_1}|,\dots,|S_{d_m}|\right)$ is an element of the
polytope $\mathbb{P}_G \cap \mathbb{H}_G$ where:

$$\mathbb{P}_G=\{x \in \mathbb{N}^m \text{ } |
\text{ } f_d(x) \geq 0, \text{ } g_d(x) \geq 0, \text{ } d \in
\mathcal{D}_n\},$$ and $$\mathbb{H}_G=\{x \in \mathbb{N}^m \text{ }
| \text{ } h(x) \geq 0\}.$$
\end{theorem}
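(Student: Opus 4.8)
The three defining inequalities of $\mathbb{P}_G \cap \mathbb{H}_G$ reflect three distinct features of $S$, and the plan is to treat them separately: the maximality of $\mathsf{k}(S)$ yields $h$, the zero-sumfreeness of $S$ yields the $g_d$, and the minimality of $|S|$ yields the $f_d$. The inequality $h(x) \geq 0$ is immediate, since $\mathsf{k}(S) = \sum_{d \in \mathcal{D}_n} |S_d|/d = \sum_{d \in \mathcal{D}_n} x_d/d$, and the hypothesis $\mathsf{k}(S) = \mathsf{k}(G)$ combined with the general lower bound $\mathsf{k}^*(G) \leq \mathsf{k}(G)$ gives $\sum_{d} x_d/d - \mathsf{k}^*(G) = \mathsf{k}(G) - \mathsf{k}^*(G) \geq 0$.

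For each $d \in \mathcal{D}_n$ I would obtain $g_d(x) \geq 0$ from the subsequence $T$ of $S$ consisting of all its elements lying in $G_d$, that is, of order dividing $d$. Its length is exactly $|T| = \sum_{d' \in \mathcal{D}_d} |S_{d'}| = \sum_{d' \in \mathcal{D}_d} x_{d'}$, and, being a subsequence of the zero-sumfree sequence $S$, it is a zero-sumfree sequence of $G_d$, whence $|T| \leq \mathsf{D}(G_d) - 1$. Since a direct computation gives $\upsilon_i(d,d) = \gcd(d,n_i)$, so that $C_{\upsilon_i(d,d)}$ is the $d$-torsion of $C_{n_i}$ and $G_{\upsilon(d,d)} = G_d$, this reads $\sum_{d' \in \mathcal{D}_d} x_{d'} \leq \mathsf{D}(G_{\upsilon(d,d)}) - 1$, i.e. $g_d(x) \geq 0$.

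The substance of the proof lies in the inequalities $f_d(x) \geq 0$, obtained by a compression argument resting on the minimality of $|S|$. Fixing $d \in \mathcal{D}_n$ and $d' \in \mathcal{D}_d \setminus \{1\}$, it suffices to show $x_d = |S_d| \leq \eta(G_{\upsilon(d',d)}) - 1$, taking the minimum over $d'$ afterwards. Suppose instead $|S_d| \geq \eta(G_{\upsilon(d',d)}) = \eta_{(d',d)}(G)$, the equality being provided by Proposition \ref{propmarrantegenerale}; then $S_d$, read as a sequence of $G_d$, contains a subsequence $S'$ with $|S'| \leq d'$ whose sum $\sigma$ lies in $G_{d/d'}$. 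Here $\sigma \neq 0$ (else $S'$ would be a nonempty zero-sum subsequence of $S$), $|S'| \geq 2$ (a singleton would force $\text{ord}(\sigma) = d$, incompatible with $\sigma \in G_{d/d'}$ when $d' > 1$), and $1/\text{ord}(\sigma) \geq d'/d \geq |S'|/d$ since $\text{ord}(\sigma) \mid d/d'$. Replacing $S'$ by the single element $\sigma$ produces a sequence $S''$ which is again zero-sumfree --- a zero-sum subsequence of $S''$ avoiding $\sigma$ would be one of $S$, and one containing $\sigma$ would yield one of $S$ after substituting $S'$ for $\sigma$ --- and whose cross number satisfies
$$\mathsf{k}(S'') = \mathsf{k}(S) - \frac{|S'|}{d} + \frac{1}{\text{ord}(\sigma)} \geq \mathsf{k}(S) = \mathsf{k}(G).$$
Thus $\mathsf{k}(S'') = \mathsf{k}(G)$, while $|S''| = |S| - |S'| + 1 \leq |S| - 1 < |S|$, contradicting the minimality of the length of $S$; hence $|S_d| \leq \eta(G_{\upsilon(d',d)}) - 1$.

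I expect the compression step to be the only genuine obstacle: the replacement must be set up so that $\sigma \neq 0$ and $S''$ stays zero-sumfree, making it a legitimate competitor for the maximum, and the whole length contradiction hinges on the automatic bound $|S'| \geq 2$ coming from $d' > 1$. Once these points are secured, the required identifications $\eta_{(d',d)}(G) = \eta(G_{\upsilon(d',d)})$ and $\mathsf{D}(G_{\upsilon(d,d)}) = \mathsf{D}(G_d)$ are exactly what Proposition \ref{propmarrantegenerale} supplies, reducing the combinatorics on $G_d$ to the classical constants of the auxiliary subgroup.
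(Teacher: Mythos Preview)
Your proof is correct and follows essentially the same approach as the paper's: the three families of inequalities are handled separately, with $h$ coming from $\mathsf{k}^*(G)\leq\mathsf{k}(G)$, the $g_d$ from the zero-sumfreeness of the elements of order dividing $d$, and the $f_d$ from a compression argument via $\eta_{(d',d)}(G)$ that contradicts the minimal length of $S$. Your version is in fact slightly more explicit than the paper's, since you justify the bound $|S'|\geq 2$ (which the paper simply asserts as $X>1$) and spell out why the compressed sequence $S''$ remains zero-sumfree; your direct identification $G_{\upsilon(d,d)}\simeq G_d$ for the $g_d$ inequality is equivalent to the paper's appeal to Proposition~\ref{propmarrantegenerale} with $d'=d$.
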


Keeping the notations of Theorem \ref{Profils}, we obtain the
following immediate corollary, which gives a general upper bound for
the little cross number of a finite Abelian group, expressed as the
solution of an integer linear program.

\begin{corollary} \label{Majoration du cross number} For every finite Abelian group $G$, one has the following upper bound:
$$\mathsf{k}(G) \leq \displaystyle\max_{x \in \mathbb{P}_G} \left(\displaystyle\sum^m_{i=1}
\frac{x_{d_i}}{d_i}\right).$$
\end{corollary}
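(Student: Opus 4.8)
The plan is to deduce this directly from Theorem \ref{Profils}, the only genuine input being a reformulation of the cross number of a sequence in terms of its order profile. First I would record that, since every element of $G$ has order dividing $\exp(G)=n$, any sequence $S$ of $G$ is partitioned into the subsequences $S_d$, $d \in \mathcal{D}_n$, so that
$$\mathsf{k}(S)=\sum_{g \in S}\frac{1}{\mathrm{ord}(g)}=\sum_{d \in \mathcal{D}_n}\frac{|S_d|}{d}=\sum_{i=1}^m \frac{x_{d_i}}{d_i},$$
where $x=\left(|S_{d_1}|,\dots,|S_{d_m}|\right)$. Thus the objective function $\sum_{i=1}^m x_{d_i}/d_i$ appearing in the statement is precisely $\mathsf{k}(S)$ read off from the order profile $x$.

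Next I would note that the little cross number $\mathsf{k}(G)$ is actually attained: every zero-sumfree sequence has length at most $\mathsf{D}(G)-1$, so only finitely many order profiles are admissible, and the maximum defining $\mathsf{k}(G)$ is reached by some zero-sumfree sequence. Among all such extremal sequences I would then select one of minimal length, call it $S^\star$, which is exactly the kind of sequence to which the hypothesis of Theorem \ref{Profils} applies.

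Applying Theorem \ref{Profils} to this minimal extremal sequence $S^\star$, its order profile $x^\star=\left(|S^\star_{d_1}|,\dots,|S^\star_{d_m}|\right)$ lies in $\mathbb{P}_G \cap \mathbb{H}_G$, and in particular in $\mathbb{P}_G$. Combining this with the first step yields
$$\mathsf{k}(G)=\mathsf{k}(S^\star)=\sum_{i=1}^m \frac{x^\star_{d_i}}{d_i} \leq \max_{x \in \mathbb{P}_G}\left(\sum_{i=1}^m \frac{x_{d_i}}{d_i}\right),$$
the last inequality holding simply because $x^\star$ is one feasible point of the integer linear program on the right. This is the announced bound.

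There is no real obstacle here: the corollary is a formal consequence of Theorem \ref{Profils}, the entire content of the argument having already been invested in proving that deeper result. The only points deserving a line of justification are the attainability of the maximum, which guarantees the existence of an extremal sequence on which Theorem \ref{Profils} can be invoked, and the identification of the objective function with $\mathsf{k}(S)$; both are routine. It is worth stressing that passing from $\mathbb{P}_G \cap \mathbb{H}_G$ to the larger polytope $\mathbb{P}_G$ only weakens the bound, which is precisely why the constraint $h(x)\geq 0$ does not appear in the corollary.
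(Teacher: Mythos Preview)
Your proof is correct and is exactly the ``immediate'' deduction the paper has in mind: it states the corollary right after Theorem \ref{Profils} without proof, and your argument simply spells out why any minimal-length extremal zero-sumfree sequence has order profile in $\mathbb{P}_G$, whence $\mathsf{k}(G)$ is bounded by the maximum of the linear objective over that polytope.
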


In principle, the wide generality of Theorem \ref{Profils} and
Corollary \ref{Majoration du cross number} leaves a good hope that
it could lead to new - and maybe optimal - upper bounds for
$\mathsf{k}(G)$ in the general case. However, such improvements will
require a precise study of the polytope $\mathbb{P}_G$, which is
certainly a complicated, but not hopeless, task.

\section{On the quantities $\mathsf{D}_{(d',d)}(G)$ and
$\eta_{(d',d)}(G)$}\label{Section D et Eta}

In this section, we will denote by $\pi_i$, for all $i \in
\llbracket 1,r \rrbracket$, the canonical epimorphism from $C_{n_i}$
to $C_{\upsilon_i(d',d)}$. Although this epimorphism clearly depends
on $d'$ and $d$, we do not emphasize this dependence here since
there is no risk of ambiguity.
Moreover, one can notice that whenever $d$ divides $n_i$, we have
$\upsilon_i(d',d)=\gcd(d',n_i)=d'$, and in particular
$\upsilon_r(d',d)=d'.$ In the sequel, when $d'=d$, we will write
$\upsilon_i(d)$ instead of $\upsilon_i(d,d).$

\begin{lem} \label{lemme arithmetique modulaire} Let $G \simeq C_{n_1} \oplus \dots \oplus C_{n_r}$, with $1 < n_1
\text{ } |\text{ } \dots \text{ }|\text{ } n_r \in \mathbb{N}$, be a
finite Abelian group and $d',d \in \mathbb{N}^*$ be such that $d \in
\mathcal{D}_{\exp(G)}$ and $d' \in \mathcal{D}_d.$
Then, for every $g=[a_1,\dots,a_r] \in G$, we have:
$$\frac{d}{d'} \left[\frac{n_1}{\gcd(d,n_1)} a_1,\dots,\frac{n_r}{\gcd(d,n_r)} a_r\right]=0
\text{ if and only if } \pi_i(a_i)=0 \text{ for all } i \in
\llbracket 1,r \rrbracket .$$
\end{lem}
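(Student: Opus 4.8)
The plan is to exploit the fact that everything decomposes coordinatewise. Since $G = C_{n_1} \oplus \dots \oplus C_{n_r}$, an element of $G$ vanishes precisely when each of its components does. Writing each $C_{n_i}$ as $\mathbb{Z}/n_i\mathbb{Z}$, the left-hand condition
$$\frac{d}{d'}\left[\frac{n_1}{\gcd(d,n_1)}a_1, \dots, \frac{n_r}{\gcd(d,n_r)}a_r\right] = 0$$
is therefore equivalent to the $r$ independent scalar congruences $c_i a_i \equiv 0 \pmod{n_i}$ for $i \in \llbracket 1, r\rrbracket$, where I set $c_i = \frac{d}{d'}\cdot\frac{n_i}{\gcd(d,n_i)}$. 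Since the right-hand condition $\pi_i(a_i) = 0$ is likewise imposed coordinate by coordinate, it suffices to prove, for each fixed $i$, the equivalence of the single scalar congruence $c_i a_i \equiv 0 \pmod{n_i}$ with $\pi_i(a_i) = 0$.

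Next I would translate both sides into divisibility statements about a representative $a_i$. On one hand, $c_i a_i \equiv 0 \pmod{n_i}$ holds if and only if $\frac{n_i}{\gcd(n_i, c_i)}$ divides $a_i$, by the standard fact that $n \mid ca$ is equivalent to $\frac{n}{\gcd(n,c)} \mid a$. On the other hand, since $\upsilon_i(d',d) \mid n_i$ (because $\upsilon_i(d',d) \mid A_i = \gcd(d',n_i) \mid n_i$), the canonical epimorphism $\pi_i \colon \mathbb{Z}/n_i\mathbb{Z} \to \mathbb{Z}/\upsilon_i(d',d)\mathbb{Z}$ is reduction modulo $\upsilon_i(d',d)$, whose kernel is generated by $\upsilon_i(d',d)$; hence $\pi_i(a_i) = 0$ if and only if $\upsilon_i(d',d) \mid a_i$. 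Thus the entire lemma reduces to the purely arithmetic identity
$$\upsilon_i(d',d) = \frac{n_i}{\gcd(n_i, c_i)}, \qquad c_i = \frac{d}{d'}\cdot\frac{n_i}{\gcd(d,n_i)}.$$

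The identity is where the actual work lies, and I would verify it prime by prime via $p$-adic valuations. Fixing a prime $p$ and writing $\nu = v_p(n_i)$, $\delta = v_p(d)$, $\delta' = v_p(d')$ (with $\delta' \leq \delta$ since $d' \mid d$), a direct computation from the definitions gives $v_p(A_i) = \min(\delta',\nu)$ and $v_p(B_i) = \max(\delta,\nu) - \max(\delta',\nu)$; using $\min(\delta',\nu) + \max(\delta',\nu) = \delta' + \nu$, one finds that $v_p$ of the left-hand side equals $\max\big(0,\, \delta' + \nu - \max(\delta,\nu)\big)$. Applying the same simplification to $v_p(c_i) = (\delta - \delta') + \max(0,\nu - \delta)$ shows that $v_p\big(n_i/\gcd(n_i,c_i)\big) = \max\big(0,\, \nu - v_p(c_i)\big)$ likewise equals $\max\big(0,\, \delta' + \nu - \max(\delta,\nu)\big)$. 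Since the two valuations agree at every prime, the identity follows, and with it the lemma. The only subtlety is the bookkeeping with the nested $\max$ and $\min$ expressions; I expect no genuine obstacle here, merely care with the valuation arithmetic.
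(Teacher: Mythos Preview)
Your argument is correct and follows the same overall route as the paper: reduce coordinatewise, then show that the scalar condition $c_i a_i \equiv 0 \pmod{n_i}$ is equivalent to $\upsilon_i(d',d)\mid a_i$. The only difference is in how that arithmetic step is carried out. The paper first observes the clean identity
\[
c_i \;=\; \frac{d}{d'}\cdot\frac{n_i}{\gcd(d,n_i)} \;=\; \frac{\mathrm{lcm}(d,n_i)}{\mathrm{lcm}(d',n_i)}\cdot\frac{n_i}{\gcd(d',n_i)} \;=\; B_i\,\frac{n_i}{A_i},
\]
so that $n_i\mid c_i a_i$ becomes $A_i\mid B_i a_i$, and then divides out $\gcd(A_i,B_i)$ to land directly on $\upsilon_i(d',d)\mid a_i$. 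This avoids any case analysis. Your route instead proves the identity $\upsilon_i(d',d)=n_i/\gcd(n_i,c_i)$ prime by prime via valuations; this works perfectly well and your valuation bookkeeping checks out, but it is a bit heavier than the paper's two-line $\gcd$/$\mathrm{lcm}$ manipulation. Either way, the substance is the same.
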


\begin{proof}
First, we have the following equalities:
\begin{eqnarray*}
\frac{d}{d'}\frac{n_i}{\gcd(d,n_i)} & = &
\frac{\mathrm{lcm}(d,n_i)}{d'}\\
&=& \frac{\mathrm{lcm}(d,n_i)n_i}{d' n_i}\\
&=& \frac{\mathrm{lcm}(d,n_i)n_i}{\gcd(d',n_i)\mathrm{lcm}(d',n_i)}\\\\
&=& B_i\frac{n_i}{A_i} \in \mathbb{N}.
\end{eqnarray*}

\medskip
Let $[a_1,\dots,a_r] \in G$ be such that:

$$\frac{d}{d'} \left[\frac{n_1}{\gcd(d,n_1)} a_1,\dots,\frac{n_r}{\gcd(d,n_r)} a_r\right]=0.$$

\medskip
\noindent For all $i \in \llbracket 1,r \rrbracket$, one has:
$$\frac{d}{d'} \frac{n_i}{\gcd(d,n_i)} a_i = B_i\frac{n_i}{A_i}a_i =
0,$$

\noindent which is equivalent, considering $a_i$ as an integer, to
the following relation:
$$A_i | B_i a_i,$$

\noindent that is to say, dividing each side by $\gcd(A_i,B_i)$,
that one has:
$$\upsilon_i(d',d) \text{ } \Big{|} \text{ } \frac{B_i}{\gcd(A_i,B_i)} a_i,$$

\noindent which, since:
$$\gcd\left(\frac{A_i}{\gcd(A_i,B_i)},\frac{B_i}{\gcd(A_i,B_i)}\right)=1,$$

\noindent is equivalent to: $$\upsilon_i(d',d) | a_i,$$

\noindent and the desired result is proved.
\end{proof}

\begin{prop} \label{propmarrantegenerale} Let $G \simeq C_{n_1} \oplus \dots \oplus C_{n_r}$, with $1 < n_1
\text{ } |\text{ } \dots \text{ }|\text{ } n_r \in \mathbb{N}$, be a
finite Abelian group and $d',d \in \mathbb{N}^*$ be such that $d \in
\mathcal{D}_{\exp(G)}$ and $d' \in \mathcal{D}_d$. Then, we have the
two following equalities:
$$\begin{cases}\mathsf{D}_{(d',d)}(G)=\mathsf{D}\left(C_{\upsilon_1(d',d)} \oplus
\dots \oplus C_{\upsilon_r(d',d)}\right),\\
\mathsf{\eta}_{(d',d)}(G)=\mathsf{\eta}\left(C_{\upsilon_1(d',d)}
\oplus \dots \oplus C_{\upsilon_r(d',d)}\right).\end{cases}$$
\end{prop}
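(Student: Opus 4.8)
The plan is to reduce both equalities to the ordinary constants $\mathsf{D}(\cdot)$ and $\eta(\cdot)$ of the group $G_{\upsilon(d',d)}$ by transporting the combinatorial problem through a well-chosen homomorphism. First I would introduce the map $\theta \colon G_d \to G$ defined by $\theta(g)=\frac{d}{d'}g$. For $g \in G_d$ one has $d'\theta(g)=dg=0$, so $\theta$ takes its values in $G_{d'}$, and $\theta(g)=0$ is equivalent to $\frac{d}{d'}g=0$, that is to $g \in G_{d/d'}$; hence the restriction of $\theta$ to $G_d$ is a homomorphism with kernel exactly $G_{d/d'}$. The crucial observation is then that for any sequence $S=(g_1,\dots,g_l)$ of $G_d$ and any $\emptyset \neq I \subseteq \llbracket 1,l\rrbracket$, the subsum $\sum_{i\in I}g_i$ lies in $G_{d/d'}$ if and only if $\sum_{i\in I}\theta(g_i)=0$ in $\theta(G_d)$. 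In other words, subsequences of $S$ with sum in $G_{d/d'}$ correspond exactly to zero-sum subsequences of the image sequence $\theta(S)=(\theta(g_1),\dots,\theta(g_l))$, and this correspondence preserves the indexing set $I$, hence the length of each subsequence.

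The key step, which I expect to be the main obstacle, is to identify the image group $\theta(G_d)$ with $G_{\upsilon(d',d)}$. For this I would factor $\theta$ through the surjection $\mu \colon G \to G_d$ sending $[a_1,\dots,a_r]$ to $\big[\frac{n_1}{\gcd(d,n_1)}a_1,\dots,\frac{n_r}{\gcd(d,n_r)}a_r\big]$, which indeed surjects onto $G_d$ since multiplication by $\frac{n_i}{\gcd(d,n_i)}$ maps $C_{n_i}$ onto its subgroup of order $\gcd(d,n_i)$, namely the set of elements of $C_{n_i}$ killed by $d$. By construction the composite $\theta \circ \mu$ is precisely the map appearing in Lemma \ref{lemme arithmetique modulaire}, so that $\theta(G_d)=(\theta\circ\mu)(G)$ and, by that lemma, $\ker(\theta\circ\mu)=\ker(\pi_1,\dots,\pi_r)$. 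Since $(\pi_1,\dots,\pi_r)\colon G \to G_{\upsilon(d',d)}$ is surjective, the first isomorphism theorem yields $\theta(G_d)\simeq G/\ker(\theta\circ\mu)=G/\ker(\pi_1,\dots,\pi_r)\simeq G_{\upsilon(d',d)}$.

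Granting this isomorphism, the equality $\mathsf{D}_{(d',d)}(G)=\mathsf{D}(G_{\upsilon(d',d)})$ follows from the correspondence above by a standard double inequality: any sequence of $G_d$ of length $\mathsf{D}(G_{\upsilon(d',d)})$ maps under $\theta$ to a sequence that must contain a zero-sum subsequence, producing a subsequence of the original with sum in $G_{d/d'}$; conversely, lifting through the surjection $\theta$ a zero-sumfree sequence of $G_{\upsilon(d',d)}$ of maximal length $\mathsf{D}(G_{\upsilon(d',d)})-1$ yields a sequence of $G_d$ with no subsequence summing into $G_{d/d'}$. For the $\eta$-statement the same two arguments apply verbatim, provided the length constraints match: the definition of $\eta_{(d',d)}(G)$ asks for subsequences of length at most $d'$, while $\eta(G_{\upsilon(d',d)})$ asks for zero-sum subsequences of length at most $\exp(G_{\upsilon(d',d)})$. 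These coincide because $\upsilon_i(d',d)=\frac{A_i}{\gcd(A_i,B_i)}$ divides $A_i=\gcd(d',n_i)$, hence divides $d'$, while $\upsilon_r(d',d)=d'$; therefore $\exp(G_{\upsilon(d',d)})=\mathrm{lcm}(\upsilon_1(d',d),\dots,\upsilon_r(d',d))=d'$, and since $\theta$ preserves lengths the correspondence respects the bound $\leq d'$. This gives $\eta_{(d',d)}(G)=\eta(G_{\upsilon(d',d)})$ and completes the proof.
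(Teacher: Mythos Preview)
Your proof is correct and follows essentially the same route as the paper's: both use Lemma~\ref{lemme arithmetique modulaire} to translate the condition ``subsum lies in $G_{d/d'}$'' into a zero-sum condition in $G_{\upsilon(d',d)}$ via the coordinate representation of elements of $G_d$, and both rely on $\upsilon_r(d',d)=d'$ to match the length constraint for $\eta$. Your version is simply more structural---phrasing the passage through the homomorphism $g\mapsto \frac{d}{d'}g$, the first isomorphism theorem, and an explicit double inequality with a lifting step---whereas the paper carries out the same computation elementwise and concludes directly from the resulting bijective correspondence.
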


\begin{proof} Let $[a_1,\dots,a_r] \in G_d$. We know that $\text{ord}\left([a_1,\dots,a_r]\right)=\mathrm{lcm}(\text{ord}(a_1),\dots,\text{ord}(a_r))$,
and so $\text{ord}\left([a_1,\dots,a_r]\right) | d$ implies
$\text{ord}(a_i) | d$ for all $i \in \llbracket 1,r \rrbracket$.

By Lagrange theorem, we also have $\text{ord}(a_i) | n_i$, which
implies that: $$\text{ord}(a_i) | \gcd(d,n_i) \text{ for all } i
\in \llbracket 1,r \rrbracket,$$ and since any cyclic group
$C_{n_i}$ contains a unique subgroup of order $\gcd(d,n_i)$, we can
write:
$$a_i=\frac{n_i}{\gcd(d,n_i)} a'_i \text{ with }
a'_i \in C_{n_i}.$$

\noindent We now consider a sequence $S=(g_1,\dots,g_m)$ of $G_d$
with $m \in \mathbb{N}^*$. According to the previous argument, the
elements of $S$ have the following form:

$$\begin{array}{ccccc}
g_1 & = & [a_{1,1},\dots,a_{1,r}] & = &
\left[\frac{n_1}{\gcd(d,n_1)}
a'_{1,1},\dots,\frac{n_r}{\gcd(d,n_r)} a'_{1,r}\right],\\
\vdots & & \vdots & & \vdots \\
g_m & = & [a_{m,1},\dots,a_{m,r}] & = &
\left[\frac{n_1}{\gcd(d,n_1)} a'_{m,1},\dots,\frac{n_r}{\gcd(d,n_r)}
a'_{m,r}\right].
\end{array}$$

\noindent Let $K$ be a nonempty subset of $\{1,\dots,m\}$. Then, the
sum $\sum_{k \in K} \left[a_{k,1},\dots,a_{k,r}\right]$ is an
element of $G_{d/d'}$ if and only if:

$$\frac{d}{d'} \displaystyle\sum_{k \in K}
\left[a_{k,1},\dots,a_{k,r}\right] =\frac{d}{d'}
\left[\frac{n_1}{\gcd(d,n_1)} \displaystyle\sum_{k \in K}
a'_{k,1},\dots,\frac{n_r}{\gcd(d,n_r)} \displaystyle\sum_{k \in K}
a'_{k,r}\right]=0,$$

\noindent and by Lemma \ref{lemme arithmetique modulaire}, this
relation is equivalent to:

$$\displaystyle\sum_{k \in K}
\left[\pi_1(a'_{k,1}),\dots,\pi_r(a'_{k,r})\right]=0 \text{ in }
\displaystyle\bigoplus^r_{i=1} C_{\upsilon_i(d',d)}.$$

\noindent Therefore, from the definition of the constant
$\mathsf{D}(.)$, one can deduce that the smallest integer $m \in
\mathbb{N}^*$ such that for every sequence $S=(g_1,\dots,g_m)$ of
$G_d$ with length $|S| \geq m$, there exists a nonempty subset $K
\subseteq \{1,\dots,m\}$ such that $\sum_{k \in K}
\left[a_{k,1},\dots,a_{k,r}\right]$ is an element of $G_{d/d'}$ is
exactly $\mathsf{D}\left(C_{\upsilon_1(d',d)} \oplus \dots \oplus
C_{\upsilon_r(d',d)}\right)$. This proves the first equality.

If moreover, one expects the additional condition $|K| \leq
\upsilon_r(d',d)=d'$ to be verified, then, by the definition of
$\mathsf{\eta}(.)$, the corresponding smallest possible integer $m
\in \mathbb{N}^*$ is exactly
$\mathsf{\eta}\left(C_{\upsilon_1(d',d)} \oplus \dots \oplus
C_{\upsilon_r(d',d)}\right)$, which proves the second equality.
\end{proof}

\section{Proof of the main theorem}\label{Section Main Theorem}
\begin{proof}[Proof of Theorem \ref{Profils}] Let $S$ be a zero-sumfree sequence of $G$ verifying $\mathsf{k}(S)=\mathsf{k}(G),$
and being of minimal length regarding this property. For every
$d \in \mathcal{D}_n$, we set $x_d=|S_d|$, and we suppose that the
$m$-tuple $x=(x_{d_1},\dots,x_{d_m})$ is not an element of the
polytope $\mathbb{P}_G \cap \mathbb{H}_G$. Thus, one has the three
following cases.

\medskip $\textbf{Case 1.}$ There exists $d_0 \in
\mathcal{D}_{n}$ such that $f_{d_0}(x)<0$. Therefore, it exists
$d'_0 \in \mathcal{D}_{d_0}\backslash\{1\}$ verifying $x_{d_0} \geq
\eta\left(C_{\upsilon_1(d'_0,d_0)} \oplus \dots \oplus
C_{\upsilon_r(d'_0,d_0)}\right)$ which means, by Proposition
\ref{propmarrantegenerale}, that $x_{d_0} \geq
\eta_{(d'_0,d_0)}(G)$. So, the sequence $S$ contains $X$ elements of
order $d_0$, with $1 < X \leq d'_0$, the sum of which is an element
of order $\tilde{d_0}$ dividing $d_0/d'_0$.

\medskip
Let $S'$ be the sequence obtained from $S$ by replacing these $X$
elements by their sum. In particular, we have $|S'|=|S|-X+1<|S|$.
Moreover, $S'$ is a zero-sumfree sequence and verifies the following
equalities:

\begin{center} $\begin{cases}
|S'_{d_0}|=|S_{d_0}|-X, \\
|S'_{\tilde{d_0}}|=|S_{\tilde{d_0}}|+1, \\
|S'_{d}|=|S_{d}| \text{ } \forall d \neq d_0,\tilde{d_0}.
\end{cases}$
\end{center}

\noindent Since $$\frac{1}{\tilde{d_0}}-\frac{X}{d_0} \geq 0,$$

\noindent one has the following inequalities:

\begin{eqnarray*}\mathsf{k}(S) & = & \displaystyle\sum_{d \in
\mathcal{D}_{\exp(G)}}
\frac{x_d}{d} \\
& \leq & \displaystyle\sum_{d \in \mathcal{D}_{\exp(G)} \backslash
\{d_0,\tilde{d_0}\}} \frac{x_d}{d} +
\frac{x_{\tilde{d_0}}+1}{\tilde{d_0}} +
\frac{x_{d_0}-X}{d_0}\\\\
& = & \mathsf{k}(S').\end{eqnarray*}

\noindent So, we obtain $\mathsf{k}(S')=\mathsf{k}(G)$ and
$|S'|<|S|$, which is a contradiction.

\medskip $\textbf{Case 2.}$ There exists $d_0 \in \mathcal{D}_{n}$ such that $g_{d_0}(x) < 0$. As a consequence, we have
$\sum_{d \in \mathcal{D}_{d_0}}
x_d \geq \mathsf{D}(C_{\upsilon_1(d_0)} \oplus \dots \oplus
C_{\upsilon_r(d_0)})$ and Proposition \ref{propmarrantegenerale}
gives the existence of a zero-sum subsequence, which is a
contradiction.

\medskip $\textbf{Case 3.}$ One has $h(x) < 0$, that is to say
$\mathsf{k}(S)=\mathsf{k}(G)<\mathsf{k}^*(G)$ which is a
contradiction.
\end{proof}

An interesting special case is the one of finite Abelian groups with
rank $2$. Indeed, for such groups, all the parameters used to define
the polytope $\mathbb{P}_G$ in the main theorem are known by Theorem
\ref{Proprietes D et Eta}:
$$\mathsf{D}_{(d,d)}(G)=\upsilon_1(d)+\upsilon_2(d)-1
\hspace{0.23cm} \text{ and } \hspace{0.31cm}
\eta_{(d',d)}(G)=2\upsilon_1(d',d)+\upsilon_2(d',d)-2,$$

\noindent and therefore allow us to compute an explicit upper bound
for the little cross number $\mathsf{k}(G)$ by linear programming
methods (see for instance the book \cite{Schri98} for an exhaustive
presentation of these methods).

\section{Some sequences related to the exponent of a finite Abelian group}\label{Section Alpha
Beta}
Let $(\alpha_l)_{l \geq 1}$ and $(\beta_l)_{l \geq 1}$ be the two
following sequences of integers, built from the set of prime numbers:
$$\alpha_1=1 \text{ and } \alpha_{l}=1 + \frac{p_l}{p_l-1}\alpha_{l-1} \text{ for all } l \geq 2,$$

\noindent as well as
$$\beta_{l}=\displaystyle\sum^l_{i=1} \frac{p_i-1}{p_i} \text{ for all } l \geq 1.$$

\noindent Finally, we define a third sequence $(\gamma_l)_{l \geq
1}$ in the following fashion:
$$\gamma_l=3\alpha_l-\beta_l \text{ for all } l \geq 1.$$

The first values of $(\alpha_l)_{l \geq 1}$ are the following:
$$\alpha_1=1, \alpha_2=2.5, \alpha_3=4.125, \alpha_4=5.8125,
\alpha_5=7.39375 \text{ etc.}$$
\noindent Since $2l-1 \leq p_l$, we can already show, by induction
on $l$, the following statement:
$$\alpha_l \leq 2l, \text{ for all } l
\geq 1.$$

\noindent Indeed, one has $\alpha_1=1 \leq 2$, and if the statement
is true for $l-1$, we obtain:
$$\alpha_l = 1 + \alpha_{l-1} + \frac{\alpha_{l-1}}{p_l-1} \leq 1 + 2(l-1) + \frac{2(l-1)}{p_l-1} \leq 2l.$$

\medskip
In order to study more precisely the behaviours of $\alpha(n)$ and
$\beta(n)$, we will extensively use a classical lower bound for the
$l$-th prime number, proved by Rosser in \cite{Ros39}, and which is
the following:
$$l \log l \leq p_l \text{ for all } l \geq
1.$$

\medskip
We can now prove Lemma \ref{lemme alpha beta 1erepartie}, which
gives accurate upper bounds for the sequences $(\alpha_l)_{l \geq
1}$ and $(\gamma_l)_{l \geq 1}$, and Lemma \ref{lemme alpha beta
2emepartie}, which states on the one hand that $\alpha_l$ and
$\beta_l$ are both equivalent to $l$ when $l$ tends to infinity, and
on the other hand that when $\omega(n)=l$ is fixed, then both
$\alpha(n)$ and $\beta(n)$ tends to $l$ when $P^-(n)$ tends to
infinity.

\begin{lem} \label{lemme alpha beta 1erepartie} The following statements
hold:
\begin{itemize}
\item[$(i)$] For every integer $n \in \mathbb{N}^*$, with $\omega(n)=l$, we have:
 $$\beta_l \leq \beta(n) \leq \alpha(n) \leq \alpha_l.$$

\item[$(ii)$] For every integer $l \geq 1$, we have:
$$l \leq \alpha_l \leq \frac{\alpha_9}{9}l, \text{ where } \frac{\alpha_9}{9}=\frac{166822111}{109486080}
\approx 1.5237.$$

\item[$(iii)$] For every integer $l \geq 1$, we have:
$$\frac{5}{2}l \leq \gamma_l \leq \frac{\gamma_8}{8}l, \text{ where }
\frac{\gamma_8}{8}=\frac{1784073894563}{476759162880} \approx
3.7421.$$
\end{itemize}
\end{lem}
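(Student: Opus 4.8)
The plan is to reduce each item to the recursion defining $\alpha_l$ (together with $\beta_l$ and $\gamma_l$) and then to explicit prime estimates. For $(i)$ the two leftmost inequalities are immediate: $\beta(n)\le\alpha(n)$ because $\beta(n)$ is $\alpha(n)$ with the nonnegative terms $\frac{P^-(d)-1}{d}$ attached to the composite divisors $d\mid n$ deleted, and $\beta_l\le\beta(n)$ because, writing the prime divisors of $n$ as $q_1<\dots<q_l$, one has $q_i\ge p_i$ while $p\mapsto 1-\frac{1}{p}$ is increasing. The substance is $\alpha(n)\le\alpha_l$, which I would prove by induction on $l=\omega(n)$ after isolating the \emph{largest} prime. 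Writing $n=q_l^{a_l}m$ with every prime factor of $m$ below $q_l$, each divisor of $n$ has the form $q_l^{j}e$ with $e\mid m$, and $P^-(q_l^{j}e)=P^-(e)$ whenever $e>1$; summing the two resulting geometric series gives the exact identity
\[
\alpha(n)=\bigl(1-q_l^{-a_l}\bigr)+\frac{q_l\bigl(1-q_l^{-(a_l+1)}\bigr)}{q_l-1}\,\alpha(m).
\]
Bounding the first term by $1$, the fraction by $\frac{q_l}{q_l-1}\le\frac{p_l}{p_l-1}$ (since $q_l\ge p_l$ and $t\mapsto\frac{t}{t-1}$ is decreasing), and $\alpha(m)\le\alpha_{l-1}$ by the induction hypothesis ($\omega(m)=l-1$), we get $\alpha(n)\le 1+\frac{p_l}{p_l-1}\alpha_{l-1}=\alpha_l$; the base case $l=1$ is the computation $\alpha(q^a)=1-q^{-a}\le 1=\alpha_1$.

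For $(ii)$, the lower bound $l\le\alpha_l$ telescopes out of $\alpha_l\ge\alpha_{l-1}+1$ (as $\frac{p_l}{p_l-1}\ge 1$) together with $\alpha_1=1$. For the upper bound I would combine a finite check with an inductive tail: fix a threshold $N$, verify $\alpha_l/l\le\alpha_9/9$ directly for $1\le l\le N$ (the maximum being attained at $l=9$), and for $l>N$ run the induction $\alpha_{l-1}\le\frac{\alpha_9}{9}(l-1)\Rightarrow\alpha_l\le\frac{\alpha_9}{9}l$, which after inserting the recursion is equivalent to $\frac{l-1}{p_l-1}\le 1-\frac{9}{\alpha_9}$. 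Rosser's bound $p_l\ge l\log l$ makes this hold once $\log l$ passes an explicit constant slightly below $3$, so $N$ need only be large enough for the computed range and the tail estimate to overlap. The upper bound $\gamma_l\le\frac{\gamma_8}{8}l$ in $(iii)$ is entirely parallel: a finite check (the ratio $\gamma_l/l$ peaks at $l=8$) plus an induction in which the increment $\gamma_l-\gamma_{l-1}=2+\frac{3\alpha_{l-1}}{p_l-1}+\frac{1}{p_l}$ is controlled by inserting the bound $\alpha_{l-1}\le\frac{\alpha_9}{9}(l-1)$ from $(ii)$ and again using $p_l\ge l\log l$.

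The lower bound $\frac{5}{2}l\le\gamma_l$ of $(iii)$ is where I expect the real difficulty. The natural attempt is again the increment identity: a per-step bound $\gamma_l-\gamma_{l-1}\ge\frac{5}{2}$ would telescope (with $\gamma_1=\frac{5}{2}$) to the claim, and this per-step bound amounts to $\frac{3\alpha_{l-1}}{p_l-1}+\frac{1}{p_l}\ge\frac{1}{2}$, which, using $\alpha_{l-1}\ge l-1$, is implied (ignoring the positive term $\frac{1}{p_l}$) by $p_l\le 6l-5$. The obstacle is that $\frac{t}{t-1}\to 1$ forces $\gamma_l-\gamma_{l-1}\to 2$, so the increments eventually fall below $\frac{5}{2}$ and a simple telescoping cannot be sustained for all $l$---and the comparison $p_l\le 6l-5$ likewise holds only on an initial range of $l$. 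I would therefore separate the argument into the finite regime where the increment comparison is available and a direct analysis of the ratio $\gamma_l/l$, which equals $\frac{5}{2}$ at $l=1$, rises to its peak near $l=8$, and is non-monotone afterwards; confirming that the slowly decreasing increments never drive $\gamma_l$ below $\frac{5}{2}l$ is, to my mind, the genuinely delicate point of the lemma.
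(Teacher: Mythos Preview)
Your treatment of $(i)$, of $(ii)$, and of the upper bound in $(iii)$ is correct. Part $(i)$ follows the paper's route exactly (induction on $\omega(n)$ after peeling off the largest prime; your displayed identity is the paper's $\alpha(q_1^{m_1}\cdots q_l^{m_l})=(1-q_l^{-m_l})+(\sum_{j=0}^{m_l}q_l^{-j})\,\alpha(q_1^{m_1}\cdots q_{l-1}^{m_{l-1}})$). For $(ii)$ and the upper half of $(iii)$ your approach differs from the paper's and is more elementary. The paper rewrites $\alpha_l/l-\alpha_9/9=\varepsilon(l)/l-\varepsilon(9)/9$ with $\varepsilon(l)=\alpha_l-l$, bounds $\varepsilon(l)$ by $7+\int_2^l\frac{2\,dt}{\log t}$, and is led to a direct verification up to $l=240$; for $(iii)$ it estimates $\gamma_{l+1}/(l+1)-\gamma_l/l$ and ends up checking up to $l=9333$. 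Your scheme---feed $\alpha_{l-1}\le\frac{\alpha_9}{9}(l-1)$ (resp.\ $\gamma_{l-1}\le\frac{\gamma_8}{8}(l-1)$) back into the recursion and reduce to an inequality of the form $\frac{l-1}{p_l-1}\le\text{const}$---needs only a short finite check (one or two dozen values) and no integral estimate; in particular your proof of the upper bound in $(iii)$ does not appeal to any lower bound on $\gamma_l$.

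That last point matters, because your caution about the lower bound $\tfrac{5}{2}l\le\gamma_l$ is well placed: the inequality is \emph{false} for large $l$. Since $\gamma_l/l=3\alpha_l/l-\beta_l/l\to 3-1=2$ by the paper's own Lemma~\ref{lemme alpha beta 2emepartie}$(i)$, one has $\gamma_l<\tfrac{5}{2}l$ eventually (the crossover is where $3\varepsilon(l)+\sum_{i\le l}1/p_i$ drops below $l/2$). The paper declares this lower bound ``straightforward'' and then invokes it in its derivation of the upper bound of $(iii)$; both steps are in error. Fortunately the applications use only the upper bound, and your increment argument proves that cleanly without the faulty input. So your difficulty is not a gap in your reasoning but a defect in the statement: replace $\tfrac{5}{2}$ by $2$ (which does follow from $\alpha_l\ge l$ and $\beta_l<l$), or simply drop the lower half of $(iii)$.
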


\begin{proof}
\begin{itemize}
\item[$(i)$] Let $n=q^{m_1}_1 \dots q^{m_l}_l$ be an integer with $q_1 < \dots <
q_l$. Since for all $i \in \llbracket 1,l \rrbracket$, one has $p_i
\leq q_i$, we obtain the first inequality:
$$\beta_l = l-\displaystyle\sum^l_{i=1}\frac{1}{p_i} \leq l-\displaystyle\sum^l_{i=1}\frac{1}{q_i}=\beta(n).$$

The second inequality follows directly from:
$$\beta(n)=\displaystyle\sum_{d \in \mathcal{D}_n \cap \mathcal{P}} \frac{P^-(d)-1}{d}
\leq \displaystyle\sum_{d \in \mathcal{D}_n}
\frac{P^-(d)-1}{d}=\alpha(n).$$

We prove the third inequality by induction on the number of distinct
prime divisors $\omega(n)=l$ of $n$. For $l=1$, the integer $n$ is
of the form $q^{m_1}_1$ and we obtain:
$$\alpha(q^{m_1}_1)=\displaystyle\sum^{m_1}_{i=1}
\frac{q_1-1}{q^{i}_1}=\frac{q^{m_1}_1-1}{q^{m_1}_1} \leq 1=
\alpha_1.$$

\noindent Assume now that the statement is valid for $l-1$.
Therefore, we have: \begin{eqnarray*} \alpha(q^{m_1}_1 \dots
q^{m_l}_l) & = & \frac{q^{m_l}_l-1}{q^{m_l}_l} +
\left(\displaystyle\sum^{m_l}_{i=0} \frac{1}{q^i_l}\right)
\alpha(q^{m_1}_1 \dots
q^{m_{l-1}}_{l-1})  \\
& \leq & \frac{q^{m_l}_l-1}{q^{m_l}_l} + \left(\displaystyle\sum^{m_l}_{i=0} \frac{1}{q^i_l}\right)\alpha_{l-1} \\
& \leq & 1 + \left(\displaystyle\sum^{+\infty}_{i=0} \frac{1}{p^i_l}\right) \alpha_{l-1} \\
& = & 1 + \frac{p_l}{p_l-1}\alpha_{l-1} =\alpha_l,\end{eqnarray*}

\noindent which proves the result.

\medskip
\item[$(ii)$] To start with, it is straightforward that the first inequality $l \leq \alpha_l$ always holds.

\noindent Concerning the second inequality, one has the following:
$$\alpha_{l+1}-\alpha_l  = 1 + \frac{\alpha_l}{p_{l+1}-1} \text{ for all } l \geq
1,$$

\noindent from which we deduce the two following relations:
$$\alpha_l = \alpha_1 + \displaystyle\sum^{l-1}_{k=1} (\alpha_{k+1}-\alpha_{k})= l + \displaystyle\sum^{l-1}_{k=1} \frac{\alpha_k}{p_{k+1}-1},$$
\vspace{-0.8cm}

\noindent as well as
\begin{eqnarray*}
\frac{\alpha_{l+1}}{l+1}-\frac{\alpha_l}{l} & = &
\frac{1}{l+1}+\alpha_l\left(\frac{1}{l+1}\left(1+\frac{1}{p_{l+1}-1}\right)-\frac{1}{l}\right).
\end{eqnarray*}

\noindent In the remainder of this proof, we will set
$\varepsilon(l)=\alpha_l-l=\displaystyle\sum^{l-1}_{k=1}
\frac{\alpha_k}{p_{k+1}-1}$, for all $l \geq 1$.

\noindent Using this notation, we obtain the following:
\begin{eqnarray*}
\frac{\alpha_l}{l}-\frac{\alpha_9}{9} & = &
\displaystyle\sum^{l-1}_{k=9}
\frac{1}{k+1}+\displaystyle\sum^{l-1}_{k=9}\alpha_k
\left(\frac{1}{k+1}\left(1+\frac{1}{p_{k+1}-1}\right)-\frac{1}{k}\right)\\
& = & \displaystyle\sum^{l-1}_{k=9}
\frac{1}{k+1}-\displaystyle\sum^{l-1}_{k=9}
\frac{k+\varepsilon(k)}{k(k+1)}+\displaystyle\sum^{l-1}_{k=9}\frac{\alpha_k}{(p_{k+1}-1)(k+1)}\\
& = & \displaystyle\sum^{l-1}_{k=9} \frac{1}{k+1}\left(\frac{\alpha_k}{(p_{k+1}-1)}-\frac{\varepsilon(k)}{k}\right)\\
& = & \displaystyle\sum^{l-1}_{k=9}
\left(\frac{\varepsilon(k+1)}{k+1}-\frac{\varepsilon(k)}{k}\right)\\
& = & \frac{\varepsilon(l)}{l}-\frac{\varepsilon(9)}{9}.
\end{eqnarray*}

\noindent Moreover, using Rosser's lower bound, we obtain for all $l
\geq 2$:
$$\varepsilon(l) = \displaystyle\sum^{l-1}_{k=1}
\frac{\alpha_k}{p_{k+1}-1}
 \leq \displaystyle\sum^{l-1}_{k=1} \frac{2k}{(k+1)\log (k+1)-1}
\leq 7 + \int^{l}_2 \frac{2\mathrm{d}t}{\log t}=lf(l),$$

\noindent where we set for all $x \in \mathbb{R}$, $x \geq 2$:
$$f(x)=\frac{1}{x}\left(7+\int^x_2 \frac{2\mathrm{d}t}{\log t}\right).$$

\noindent It is readily seen that this function is non-increasing.
Moreover, since:
$$\frac{\varepsilon(9)}{9}=\left(\frac{\alpha_9-9}{9}\right) > \frac{1}{2},$$

\noindent and since $f(l) \leq 1/2$ for all $l \geq 241,$ we obtain:
$$\frac{\alpha_l}{l} \leq \frac{\alpha_9}{9}, \text{ for all } l \geq 241.$$

\noindent On the other hand, an easy computation allows us to verify
that $\alpha_9/9$ is also the maximum value taken by
$(\alpha_l/l)_{l \geq 1}$ on $1 \leq l \leq 240$, which proves the
desired result.

\medskip
\item[$(iii)$] The fact that the first inequality $5l/2 \leq \gamma_l$ always holds is
straightforward.

\noindent Moreover, for all $l \geq 1$, one has the following
equality:
\begin{eqnarray*}
\gamma_{l+1} & = & 3\alpha_{l+1}-\beta_{l+1}\\
& = & 3 + 3\alpha_l
+ \frac{3\alpha_l}{p_{l+1}-1}-\beta_l-1+\frac{1}{p_{l+1}}\\
& = & 2 + \gamma_l + \frac{3\alpha_l}{p_{l+1}-1}+\frac{1}{p_{l+1}}.
\end{eqnarray*}

\noindent Using the inequalities $5l/2 \leq \gamma_l$ and $\alpha_l
\leq \alpha_9l/9$, one can deduce that:
\begin{eqnarray*}
\frac{\gamma_{l+1}}{l+1}-\frac{\gamma_l}{l} & = & \frac{2}{l+1} + \gamma_l\left(\frac{1}{l+1}-\frac{1}{l}\right) + \frac{3\alpha_l}{(l+1)(p_{l+1}-1)}+\frac{1}{p_{l+1}(l+1)}\\
& \leq &
\frac{1}{p_{l+1}(l+1)}\left(-\frac{p_{l+1}}{2}+\frac{\alpha_9l}{3}\left(1+\frac{1}{p_{l+1}-1}\right)+1\right).
\end{eqnarray*}

\noindent We set, for all $x \in \mathbb{R}$, $x \geq 1$:
$$g(x)=-\frac{(x+1)\log(x+1)}{2}+\frac{\alpha_9x}{3}\left(1+\frac{1}{(x+1)\log(x+1)-1}\right)+1.$$

\noindent It is easily seen that this function is non-increasing.
Moreover, since a study of $g$ shows that $g(l) \leq 0$ for all $l
\geq 9333,$ we obtain:
$$\frac{\gamma_{l+1}}{l+1}-\frac{\gamma_l}{l} \leq 0, \text{ for all } l \geq 9333.$$

\noindent On the other hand, an easy computation allows us to verify
that $(\gamma_l)_{l \geq 1}$ is increasing from $l=1$ to $l=8$ and
decreasing from $l=8$ to $l=9333$, which proves the desired result.
\end{itemize}
\end{proof}

\begin{lem} \label{lemme alpha beta 2emepartie} The two following statements
hold:
\begin{itemize}
\item[$(i)$] $$\displaystyle\lim_{l \rightarrow
+\infty}\frac{\alpha_l}{l}=1 \hspace{0.5cm} \text{ and }
\hspace{0.5cm} \displaystyle\lim_{l \rightarrow
+\infty}\frac{\beta_l}{l}=1,$$

\item[$(ii)$] $$\displaystyle\lim_{\tiny{\begin{array}{c} P^-(n)  \rightarrow
+\infty\\\omega(n)=l\end{array}}}\alpha(n)=l \hspace{0.5cm} \text{
and } \hspace{0.5cm} \displaystyle\lim_{\tiny{\begin{array}{c}
P^-(n) \rightarrow +\infty
\\ \omega(n)=l
\end{array}}}\beta(n)=l.$$
\end{itemize}
\end{lem}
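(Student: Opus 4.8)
The plan is to treat the four limits in two groups, handling the sequences $\alpha_l$ and $\beta_l$ of statement $(i)$ first, then the arithmetic functions $\alpha(n)$ and $\beta(n)$ of statement $(ii)$, reusing throughout the explicit estimates already established in Lemma \ref{lemme alpha beta 1erepartie}. In both statements the $\beta$-case is essentially a one-line computation, whereas the $\alpha$-case requires more care.

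For statement $(i)$, I would write $\beta_l = l - \sum_{i=1}^l 1/p_i$ and note that, since $p_i \geq i+1$, the subtracted sum is at most $\sum_{i=1}^l 1/(i+1) \leq \log(l+1) = o(l)$; hence $\beta_l/l \to 1$. For $\alpha_l$ I would reuse the quantity $\varepsilon(l) = \alpha_l - l$ together with the bound $\varepsilon(l) \leq l f(l)$ obtained in the proof of Lemma \ref{lemme alpha beta 1erepartie}$(ii)$, where $f(x) = \frac{1}{x}\left(7 + \int_2^x 2\,\mathrm{d}t/\log t\right)$. Since $l \leq \alpha_l$ and $\alpha_l/l = 1 + \varepsilon(l)/l \leq 1 + f(l)$, it suffices to prove $f(l) \to 0$, i.e. $\int_2^x \mathrm{d}t/\log t = o(x)$; this follows by splitting the integral at $\sqrt{x}$ and bounding $1/\log t$ by $1/\log 2$ on $[2,\sqrt{x}]$ and by $2/\log x$ on $[\sqrt{x},x]$. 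A squeeze then gives $\alpha_l/l \to 1$.

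For statement $(ii)$, write $n = q_1^{m_1}\cdots q_l^{m_l}$ with $q_1 < \cdots < q_l$, so that each $q_i \geq P^-(n)$. The prime divisors of $n$ are precisely $q_1,\dots,q_l$, whence $\beta(n) = \sum_{i=1}^l (q_i-1)/q_i = l - \sum_{i=1}^l 1/q_i$, and the subtracted sum is at most $l/P^-(n) \to 0$. For $\alpha(n)$ the crude bound $\alpha(n) \leq \alpha_l$ from Lemma \ref{lemme alpha beta 1erepartie}$(i)$ is of no use, since $\alpha_l > l$; instead I would group the divisors of $n$ according to their smallest prime factor (equivalently, unroll the recursion from the proof of Lemma \ref{lemme alpha beta 1erepartie}$(i)$) to obtain the closed form
$$\alpha(n) = \sum_{j=1}^l \left(1 - q_j^{-m_j}\right) \prod_{k=j+1}^l S_k, \quad \text{where } S_k = \sum_{i=0}^{m_k} q_k^{-i} \leq \frac{q_k}{q_k-1} \leq \frac{P^-(n)}{P^-(n)-1}.$$
Each of the $l$ summands is then at most $\left(P^-(n)/(P^-(n)-1)\right)^l$, so that $\alpha(n) \leq l\left(P^-(n)/(P^-(n)-1)\right)^l$, a bound depending only on $l$ and $P^-(n)$ and tending to $l$; together with $\alpha(n) \geq \beta(n) \to l$ from Lemma \ref{lemme alpha beta 1erepartie}$(i)$, this yields $\alpha(n) \to l$ by squeezing.

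The two $\beta$-computations are routine. The main obstacle is the upper bound for $\alpha(n)$ in $(ii)$: one must resist using $\alpha(n) \leq \alpha_l$, which overshoots $l$, and instead exploit the finer structure, producing a closed form whose factors $1-q_j^{-m_j}$ and $S_k$ all converge to $1$ uniformly as $P^-(n) \to \infty$. Once that closed form and its term-by-term bound are in place, both limits in $(ii)$ follow by squeezing, and the only other delicate point, the convergence $f(l) \to 0$ in $(i)$, reduces to the standard estimate showing that the logarithmic integral is $o(x)$.
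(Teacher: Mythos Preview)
Your argument is correct. The route differs from the paper's in a few mild but noteworthy ways. For $(i)$, the paper bounds $\alpha_l$ by $l + \sum_{k=1}^{l-1} 2k/(p_{k+1}-1)$ and then invokes the prime number theorem to see that this sum is $\sim l/\log l$; you instead recycle the estimate $\varepsilon(l) \leq l f(l)$ obtained inside the proof of Lemma~\ref{lemme alpha beta 1erepartie}$(ii)$ (which itself rests on Rosser's bound) and reduce to the elementary fact $\int_2^x \mathrm{d}t/\log t = o(x)$, thereby avoiding the full prime number theorem. Similarly, for $\beta_l$ the paper uses Rosser's inequality $p_i \geq i\log i$ to reach $\beta_l \geq l - 2 - \log\log l$, whereas your cruder bound $p_i \geq i+1$ already gives $\beta_l \geq l - \log(l+1)$, which is weaker but entirely sufficient and more elementary. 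For $(ii)$, the paper merely writes that the result ``follows from the very definition of $\alpha(n)$ and $\beta(n)$''; your explicit closed form $\alpha(n) = \sum_{j=1}^{l} (1 - q_j^{-m_j}) \prod_{k>j} S_k$ together with the monotonicity of $x/(x-1)$ makes this precise and gives a clean uniform upper bound $l\,(P^-(n)/(P^-(n)-1))^l$. In short, your proof is a self-contained and slightly more elementary variant: it trades the prime number theorem for bounds already available in the paper, at the cost of a longer treatment of~$(ii)$.
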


\begin{proof}
\begin{itemize}
\medskip
\item[$(i)$] Firstly, for all $l \geq 1$, one has the following inequality:
$$l \leq \alpha_l \leq l + \displaystyle\sum^{l-1}_{k=1} \frac{2k}{p_{k+1}-1},$$

\noindent and since the prime number theorem reads as $p_k \sim
k\log k$, we can deduce that:
$$\displaystyle\sum^{l-1}_{k=1} \frac{k}{p_{k+1}-1} \sim
\displaystyle\sum^{l}_{k=2} \frac{1}{\log k} \sim \frac{l}{\log
l}.$$

\noindent Therefore, when $l$ tends to infinity, we obtain
$\displaystyle\lim_{l \rightarrow +\infty}(\alpha_l/l)=1.$

\bigskip Secondly, we can deduce from Rosser's lower bound that for
every $l \geq 3$, one has:
\begin{eqnarray*} \beta_l & \geq & l-\frac{5}{6}-\displaystyle\sum^l_{i=3}\frac{1}{i \log
i}\\
& \geq & l-2-\log\log l.
\end{eqnarray*}

\noindent Since, on the other hand, one always has $\beta_l \leq l$,
we obtain
$\displaystyle\lim_{l \rightarrow +\infty}(\beta_l/l)=1,$ which is

\vspace{-0.1cm} \noindent the desired result.

\medskip
\item[$(ii)$] The result follows from the very definition of
$\alpha(n)$ and $\beta(n)$.
\end{itemize}
\end{proof}

\section{Upper bounds  for the little cross number}\label{Section Majoration}
As previously stated, the upper bound implied by Theorem
\ref{Profils}, and given in Corollary \ref{Majoration du cross
number}, is expressed as the solution of an integer linear program.
Even if this formulation is more precise than any explicit formula
derived from Theorem \ref{Profils}, one may still like to obtain
such a formula in order to interprete the behaviour of the cross
number. In the present section, we obtain such a formula in
Proposition \ref{majoration qualitative generale}. For the proof of
this result, we will use the following lemma, which can be found in
\cite{GeroKoch05}, Proposition $5.1.11$.

\begin{lem} \label{little cross number d'un sous-groupe} Let $H$ be a finite Abelian group and $G \subseteq H$ a
subgroup. Then, one has:
\begin{itemize}
\item[$(i)$] $$\mathsf{k}(G) + \frac{\mathsf{k}(H/G)}{\exp(G)} \leq
\mathsf{k}(H).$$

\item[$(ii)$] If $G$ is a direct summand of $H$, then:
$$\mathsf{k}(G) + \mathsf{k}(H/G) \leq
\mathsf{k}(H).$$
\end{itemize}
\end{lem}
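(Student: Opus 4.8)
The plan is to prove both inequalities by the same device: starting from zero-sumfree sequences realising the little cross numbers of the smaller groups, I will manufacture an explicit zero-sumfree sequence of $H$ whose cross number equals (or exceeds) the left-hand side, and then simply invoke the definition of $\mathsf{k}(H)$ as a maximum over zero-sumfree sequences of $H$.

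For $(i)$, let $\pi\colon H\to H/G$ be the canonical projection and set $n=\exp(G)$. I would choose a zero-sumfree sequence $T=(g_1,\dots,g_s)$ of $G$ with $\mathsf{k}(T)=\mathsf{k}(G)$ and a zero-sumfree sequence $\bar U=(\bar u_1,\dots,\bar u_t)$ of $H/G$ with $\mathsf{k}(\bar U)=\mathsf{k}(H/G)$, pick for each $j$ an arbitrary lift $u_j\in H$ with $\pi(u_j)=\bar u_j$, and form the concatenation $S=(g_1,\dots,g_s,u_1,\dots,u_t)$ in $H$. The first key step is to check that $S$ is zero-sumfree: applying $\pi$ to any vanishing subsum annihilates the $g_i$-part and leaves a vanishing subsum of $\bar U$, which by zero-sumfreeness of $\bar U$ must be empty; the remaining $g_i$-part then vanishes in $G$, and zero-sumfreeness of $T$ forces it to be empty too. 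The second key step is an order estimate for the lifts: since $\mathrm{ord}(\bar u_j)\,u_j\in\ker\pi=G$, it is killed by $n$, so $\mathrm{ord}(u_j)\mid n\,\mathrm{ord}(\bar u_j)$ and hence $1/\mathrm{ord}(u_j)\ge 1/\bigl(n\,\mathrm{ord}(\bar u_j)\bigr)$. Summing the cross-number contributions gives
$$\mathsf{k}(S)=\mathsf{k}(T)+\sum_{j=1}^{t}\frac{1}{\mathrm{ord}(u_j)}\ge \mathsf{k}(G)+\frac{1}{n}\sum_{j=1}^{t}\frac{1}{\mathrm{ord}(\bar u_j)}=\mathsf{k}(G)+\frac{\mathsf{k}(H/G)}{\exp(G)},$$
and since $S$ is a zero-sumfree sequence of $H$ its cross number is at most $\mathsf{k}(H)$, which is exactly $(i)$.

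For $(ii)$, the direct-summand hypothesis lets me replace the quotient by an internal copy: write $H=G\oplus K$ with $K\simeq H/G$, so that $\mathsf{k}(K)=\mathsf{k}(H/G)$. I would choose optimal zero-sumfree sequences $T$ of $G$ and $V$ of $K$ and concatenate them inside $H$. Here the separation argument becomes exact because $G\cap K=\{0\}$: a vanishing subsum must vanish separately in $G$ and in $K$, forcing both index sets to be empty, so the concatenation is zero-sumfree. Crucially, each element now retains its order in $H$ (no quotient is taken), so there is no divisibility penalty and one gets $\mathsf{k}(T\cup V)=\mathsf{k}(G)+\mathsf{k}(K)=\mathsf{k}(G)+\mathsf{k}(H/G)\le\mathsf{k}(H)$.

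The computations are otherwise entirely routine; the one genuinely substantive point is the order divisibility $\mathrm{ord}(u_j)\mid\exp(G)\,\mathrm{ord}(\bar u_j)$, which is precisely what produces the factor $1/\exp(G)$ in $(i)$. Conversely, the disappearance of this penalty when $G$ is a direct summand is exactly what permits dropping that factor in $(ii)$.
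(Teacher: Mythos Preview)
Your proof is correct. The paper does not actually prove this lemma; it simply quotes it from Geroldinger and Halter-Koch (Proposition~5.1.11 of \cite{GeroKoch05}), and your argument is precisely the standard one found in that reference: lift an optimal zero-sumfree sequence from $H/G$, concatenate with an optimal zero-sumfree sequence of $G$, verify zero-sumfreeness by projecting to $H/G$, and control the orders of the lifts via $\mathrm{ord}(u_j)\mid\exp(G)\,\mathrm{ord}(\bar u_j)$.
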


\medskip
We are now ready to prove the following proposition.
\begin{prop} \label{majoration qualitative generale} Let $G$
be a finite Abelian group with $r(G)=r$ and $\exp(G)=n$. We set
$H=C^r_n$ and also:
$$\varphi(G,H)=\begin{cases}
\mathsf{k}^*(H/G)     \hspace{0.48cm} \text{ if } G \text{ is a direct summand of } H,\\
\mathsf{k}^*(H/G)/n      \text{ otherwise}.
  \end{cases}$$

\noindent Then, one has the following upper bound for the little
cross number $\mathsf{k}(G)$:
$$\mathsf{k}(G) \leq \displaystyle\sum_{d \in \mathcal{D}_{n}}
\frac{\min\left(\eta\big(C^r_{P^-(d)}\big),\mathsf{D}(C^r_d)\right)-1}{d}
- \varphi(G,H).$$
\end{prop}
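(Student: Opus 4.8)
The plan is to reduce the statement to the special case $G = H = C^r_n$ by a standard subgroup/quotient argument, and then to read the bound for $\mathsf{k}(H)$ straight off the main theorem by relaxing the polytope $\mathbb{P}_H$ coordinate by coordinate.

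First I would set up the reduction. Since $G \simeq C_{n_1} \oplus \dots \oplus C_{n_r}$ with $n_i \mid n_r = n = \exp(G)$ for every $i$, each cyclic factor $C_{n_i}$ is isomorphic to the unique subgroup of order $n_i$ of $C_n$, so $G$ embeds as a subgroup of $H = C^r_n$. I would then invoke Lemma \ref{little cross number d'un sous-groupe}. If $G$ is a direct summand of $H$, part $(ii)$ gives $\mathsf{k}(G) \leq \mathsf{k}(H) - \mathsf{k}(H/G) \leq \mathsf{k}(H) - \mathsf{k}^*(H/G)$; in the general case, part $(i)$ together with $\exp(G) = n$ gives $\mathsf{k}(G) \leq \mathsf{k}(H) - \mathsf{k}(H/G)/n \leq \mathsf{k}(H) - \mathsf{k}^*(H/G)/n$, where both lines use the elementary bound $\mathsf{k}^*(H/G) \leq \mathsf{k}(H/G)$. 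In either case this reads $\mathsf{k}(G) \leq \mathsf{k}(H) - \varphi(G,H)$, so it remains only to bound $\mathsf{k}(H) = \mathsf{k}(C^r_n)$.

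Next I would specialize Theorem \ref{Profils}, in the form of Corollary \ref{Majoration du cross number}, to $H = C^r_n$. Here every $n_i$ equals $n$, so $d \mid n_i$ for all $d \in \mathcal{D}_n$, and the remark opening Section \ref{Section D et Eta} yields $\upsilon_i(d',d) = d'$ and $\upsilon_i(d,d) = d$ for each $i$. Hence $G_{\upsilon(d',d)} = C^r_{d'}$ and $G_{\upsilon(d,d)} = C^r_d$, and for each $d \in \mathcal{D}_n$ the defining inequalities of $\mathbb{P}_H$ become
$$x_d \leq \min_{d' \in \mathcal{D}_d \setminus \{1\}} \eta(C^r_{d'}) - 1 \qquad \text{and} \qquad \sum_{d' \in \mathcal{D}_d} x_{d'} \leq \mathsf{D}(C^r_d) - 1.$$

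Finally I would bound each coordinate separately. In the first family, choosing the admissible divisor $d' = P^-(d) \in \mathcal{D}_d \setminus \{1\}$ gives $x_d \leq \eta(C^r_{P^-(d)}) - 1$; in the second, discarding every nonnegative term except the one indexed by $d$ itself gives $x_d \leq \mathsf{D}(C^r_d) - 1$ (for $d = 1$ this forces $x_1 = 0$, matching the vanishing summand). Combining, $x_d \leq \min\!\big(\eta(C^r_{P^-(d)}), \mathsf{D}(C^r_d)\big) - 1$ for every $d$, so, the weights $1/d$ being positive, Corollary \ref{Majoration du cross number} gives $\mathsf{k}(H) \leq \max_{x \in \mathbb{P}_H} \sum_d x_d/d \leq \sum_{d \in \mathcal{D}_n} \big(\min(\eta(C^r_{P^-(d)}), \mathsf{D}(C^r_d)) - 1\big)/d$, which with the reduction above is exactly the asserted inequality. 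The only genuinely delicate point is the reduction step, where one must split into the direct-summand and general cases so that the two branches of $\varphi(G,H)$ match the two parts of Lemma \ref{little cross number d'un sous-groupe}; everything else is a coordinate-wise relaxation of $\mathbb{P}_H$ that deliberately throws away most of the structure of the polytope, which is why this corollary of the main theorem is a rather rough one.
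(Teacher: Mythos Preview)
Your proposal is correct and follows the same two-step approach as the paper: first reduce to $H=C^r_n$ via Lemma~\ref{little cross number d'un sous-groupe} (using $\mathsf{k}^*(H/G)\leq\mathsf{k}(H/G)$ to match the definition of $\varphi$), then bound $\mathsf{k}(H)$ via Theorem~\ref{Profils}. The paper's proof merely writes ``the desired result follows from Theorem~\ref{Profils} applied to $H$'', whereas you have correctly unpacked that sentence by computing $\upsilon_i(d',d)=d'$ in the homocyclic case and relaxing the constraints $f_d\geq 0$ (taking $d'=P^-(d)$) and $g_d\geq 0$ (dropping all but the $x_d$ term) coordinate by coordinate.
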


\begin{proof} Since the group $G$ can be injected
in the group $H=C^r_n$, one obtains, applying Lemma \ref{little
cross number d'un sous-groupe}, the relation $\mathsf{k}(G) +
\varphi(G,H) \leq \mathsf{k}(H)$. Then, the desired result follows
from Theorem \ref{Profils} applied to $H$.
\end{proof}

One can notice that for all $r \in \mathbb{N}^*$ and every $p \in
\mathcal{P}$, one always has $\mathsf{D}(C^r_p)\leq
\eta\big(C^r_p\big)$, by definition. Therefore, if we consider an
elementary $p$-group with rank $r$, we obtain:
$$\mathsf{k}\left(C^r_p\right) \leq \displaystyle\sum_{d \in
\mathcal{D}_p} \frac{\mathsf{D}(C^r_d)-1}{d}=\frac{r(p-1)}{p}=
\mathsf{k}^*\left(C^r_p\right).$$ Let $G$ be a finite Abelian group
with $r(G)=r$ and $\exp(G)=n$. Using Theorem \ref{Proprietes D et
Eta}, one obtains that if $r=1$, then for all $d \in \mathcal{D}_n
\backslash \mathcal{P}$, we have $\mathsf{D}(C_d)\geq
\eta\big(C_{P^-(d)}\big)$. Moreover, when $r=2$, then for all $d \in
\mathcal{D}_n \backslash \mathcal{P}$, one has the following
inequality:
$$\mathsf{D}(C^2_d)=2d-1 \geq 3P^-(d)-2 =\eta\big(C^2_{P^-(d)}\big).$$

\noindent Yet, as soon as $r \geq 3$, and except for special types
groups, it becomes more complicated to know exactly, for a given $d$
in $\mathcal{D}_n \backslash \mathcal{P}$, what is the minimum of
$\eta\big(C^r_{P^-(d)}\big)$ and $\mathsf{D}(C^r_d)$.
For this reason, Theorem \ref{Profils} and Proposition
\ref{majoration qualitative generale} remain, in general, really
stronger than Proposition \ref{majoration qualitative}, which we are
going to prove now. Even so, we will see in the next section that
Proposition \ref{majoration qualitative} implies sharp asymptotical
results on the little cross number and the cross number.

\begin{proof}[Proof of Proposition \ref{majoration qualitative}]
Applying Proposition \ref{majoration qualitative generale} and
Theorem \ref{Alon c_r}, we obtain the desired result in the
following manner:
\begin{eqnarray*}\mathsf{k}(G)+\varphi(G,H) & \leq &
\displaystyle\sum_{d \in \mathcal{D}_{n}}
\frac{\min\left(\eta\big(C^r_{P^-(d)}\big),\mathsf{D}(C^r_d)\right)-1}{d}\\
& \leq & \displaystyle\sum_{d \in \mathcal{D}_{n}\cap \mathcal{P}}
\frac{r(P^-(d)-1)}{d} + \displaystyle\sum_{d \in \mathcal{D}_{n}
\backslash \mathcal{P}}
\frac{c_r(P^-(d)-1)}{d}\\\\
& = & c_r\big(\alpha(n)-\beta(n)\big)+r\beta(n).\end{eqnarray*}
\end{proof}

We can now prove the announced qualitative upper bound.
\begin{proof}[Proof of Proposition \ref{Majoration du cross number par omega}]
Since, by the definitions of Section \ref{Section Alpha Beta}, one
always has the following straightforward inequalities:
$$\frac{\omega(n)}{2} \leq \beta(n) \hspace{0.5cm} \text{ and }
\hspace{0.5cm} \alpha(n) \leq 2\omega(n),$$

\noindent we can deduce, by Proposition \ref{majoration qualitative}
and the inequality $r \leq c_r$, the following relation:
$$\mathsf{k}(G) \leq c_r \big(\alpha(n)-\beta(n)\big)+r\beta(n) \leq \left(\frac{3c_r+r}{2}\right)\omega(n),$$
which gives the desired result.
\end{proof}

According to the previous remark, and in the case of $r=1$ or $2$,
$\eta\big(C^r_{P^-(d)}\big)$ and $\mathsf{D}(C^r_d)$ are known and
easy to compare. Therefore, we can prove Proposition \ref{Majoration
du cross number des groupes de rang 2}.
\begin{proof}[Proof of Proposition \ref{Majoration du cross number des groupes de rang 2}]
Applying Theorem \ref{Proprietes D et Eta}, one can choose $c_1=1$
and $c_2=3$.
\begin{itemize}
\item[$(i)$] For every $n \in \mathbb{N}^*$, one has by Proposition \ref{majoration
qualitative} and Lemma \ref{lemme alpha beta 1erepartie} $(i),(ii)$:
$$\mathsf{k}(C_n) \leq \alpha(n) \leq \alpha_{\omega(n)} \leq \frac{\alpha_9}{9}\omega(n),$$
which proves that one can take $d_1=\alpha_9/9$.

\medskip
\item[$(ii)$] For all $m,n \in \mathbb{N}^*$ with $1 < m |n$, one has by Proposition \ref{majoration
qualitative} applied to $G \simeq C_m \oplus C_n$ and Lemma
\ref{lemme alpha beta 1erepartie} $(i),(iii)$:
$$\mathsf{k}(G) \leq 3\alpha(n)-\beta(n)-\varphi(G,C^2_n) \leq \gamma_{\omega(n)} \leq \frac{\gamma_8}{8}\omega(n),$$
\noindent which proves that one can take $d_2=\gamma_8/8$.
\end{itemize}
\end{proof}

\section{Asymptotical results}\label{Section resultats asymptotiques}
In the present section, we will apply the results obtained in
Section \ref{Section Majoration} in order to prove that
Conjecture \ref{Conjecture Krause} holds asymptotically in the two
directions of Proposition \ref{majoration en fonction des li} and
Proposition \ref{majoration C^r_n bis}.

\begin{proof}[Proof of Proposition \ref{majoration en fonction des li}] First, we have:
$$\lim_{\tiny{\begin{array}{c} P^-(n_r) \rightarrow +\infty \\ C_{n_1} \oplus \dots \oplus C_{n_r} \in
\mathcal{E}_{(l_1,\dots,l_r)}
\end{array}}}\mathsf{k}^*(C_{n_1}
\oplus \dots \oplus C_{n_r})=\displaystyle\sum^r_{i=1}l_i,$$

\noindent and since by the Chinese remainder theorem, every $C_{n_1}
\oplus \dots \oplus C_{n_r}$ in $\mathcal{E}_{(l_1,\dots,l_r)}$ is a
direct summand of $C^r_{n_r}$, we obtain using Lemma \ref{little
cross number d'un sous-groupe} $(ii)$:

\begin{eqnarray*}\mathsf{k}(C_{n_1}\oplus \dots \oplus C_{n_r}) &
\leq & \mathsf{k}\left(C^r_{n_r}\right) -
\mathsf{k}^*\left(C_{\frac{n_r}{n_{r-1}}} \oplus
\dots \oplus C_{\frac{n_r}{n_1}}\right) \\
&=&\mathsf{k}\left(C^r_{n_r}\right)-\displaystyle\sum^{r-1}_{i=1}\mathsf{k}^*\left(C_{\frac{n_r}{n_i}}\right).\end{eqnarray*}

\noindent On the one hand, we have by Lemma \ref{lemme alpha beta
2emepartie} $(ii)$:

$$\displaystyle\limsup_{\tiny{\begin{array}{c} P^-(n_r) \rightarrow +\infty \\
\omega(n_r)=l_r
\end{array}}} \mathsf{k}(C^r_{n_r})
 \leq \limsup_{\tiny{\begin{array}{c} P^-(n_r) \rightarrow +\infty \\
\omega(n_r)=l_r\end{array}}}
c_r\big(\alpha(n_r)-\beta(n_r)\big)+r\beta(n_r)=rl_r,$$

\medskip
\noindent on the other hand, since for all $i \in \llbracket 1,r
\rrbracket$, the equality $\text{gcd}(n_i,n_r/n_i)=1$ implies
$\omega(n_r/n_i)=\omega(n_r)-\omega(n_i)$, we also have:

$$\lim_{\tiny{\begin{array}{c} P^-(n_r) \rightarrow +\infty \\
C_{n_1} \oplus \dots \oplus C_{n_r} \in
\mathcal{E}_{(l_1,\dots,l_r)}
\end{array}}}\displaystyle\sum^{r-1}_{i=1}\mathsf{k}^*\left(C_{\frac{n_r}{n_i}}\right)
=\displaystyle\sum^{r-1}_{i=1}\omega\left(\frac{n_r}{n_i}\right)=\displaystyle\sum^r_{i=1}(l_r-l_i).$$

\medskip
\noindent Finally, we obtain:

$$\displaystyle\sum^r_{i=1}l_i \leq \displaystyle\liminf_{\tiny{\begin{array}{c}
P^-(n_r) \rightarrow +\infty \\
C_{n_1} \oplus \dots \oplus C_{n_r} \in
\mathcal{E}_{(l_1,\dots,l_r)}
\end{array}}} \mathsf{k}(C_{n_1} \oplus \dots \oplus C_{n_r})$$
$$\leq \displaystyle\limsup_{\tiny{\begin{array}{c}
P^-(n_r) \rightarrow +\infty \\
C_{n_1} \oplus \dots \oplus C_{n_r} \in
\mathcal{E}_{(l_1,\dots,l_r)}
\end{array}}} \mathsf{k}(C_{n_1} \oplus \dots \oplus C_{n_r}) \leq rl_r-\displaystyle\sum^r_{i=1}(l_r-l_i)=\displaystyle\sum^r_{i=1}l_i.$$
The corresponding statements for $\mathsf{K}(.)$ and $\mathsf{D}(.)$
are then deduced from Proposition \ref{majoration de K et Dav par
k}.
\end{proof}

Since, as mentioned in Section \ref{Section Introduction},
Proposition \ref{majoration C^r_n} is an immediate corollary of
Proposition \ref{majoration en fonction des li} by specifying
$n_1=\dots=n_r$, we now prove an asymptotical result of an other
type.

\begin{proof}[Proof of Proposition \ref{majoration
C^r_n bis}] First, we have:
$$\displaystyle\lim_{\tiny{\begin{array}{c} \omega(n) \rightarrow +\infty \end{array}}} \frac{\mathsf{k}^*(C^r_n)}{\omega(n)}=r,$$

\medskip
\noindent Moreover, by Proposition \ref{majoration qualitative}
applied to $C^r_n$, we obtain:

$$\frac{\mathsf{k}(C^r_n)}{\omega(n)} \leq
c_r\left(\frac{\alpha(n)-\beta(n)}{\omega(n)}\right)+r\frac{\beta(n)}{\omega(n)},$$

\noindent which implies, by Lemma \ref{lemme alpha beta 2emepartie}
$(i)$, the following inequalities when $\omega(n)$ tends to infinity:

$$r=\displaystyle\lim_{\tiny{\begin{array}{c} \omega(n) \rightarrow +\infty \end{array}}} \frac{\mathsf{k}^*(C^r_n)}{\omega(n)}
\leq \displaystyle\lim_{\tiny{\begin{array}{c} \omega(n) \rightarrow
+\infty \end{array}}} \frac{\mathsf{k}(C^r_n)}{\omega(n)} \leq r.$$

\medskip
\noindent The result for $\mathsf{K}(.)$ is then deduced from
Proposition \ref{majoration de K et Dav par k} $(i)$.
\end{proof}

Each of the two previous asymptotical results admits a corollary
which may appear more general at first sight. So as to state the
first one, we will use the following notation, which recalls the one
used for the sets $\mathcal{E}_{(l_1,\dots,l_r)}$. For every $r,l
\in \mathbb{N}^*$, we set:
$$\mathcal{E}_{r,l}=\{G \text{ finite Abelian group } | \text{ }
\text{r}(G)=r, \text{ } \omega(\exp(G))=l\}.$$

\noindent With this notation, we obtain the following corollary.
\newpage
\begin{corollary}\label{majoration en fonction de l} For all integers $r,l \in \mathbb{N}^*$ the three following statements hold:
\begin{itemize}
\item[$(i)$] $$\displaystyle\limsup_{\tiny{\begin{array}{c} P^-(\exp(G)) \rightarrow +\infty \\ G \in \mathcal{E}_{r,l} \end{array}}} \mathsf{k}(G) \leq rl,$$

\item[$(ii)$] $$\displaystyle\limsup_{\tiny{\begin{array}{c} P^-(\exp(G)) \rightarrow +\infty \\ G \in \mathcal{E}_{r,l} \end{array}}} \mathsf{K}(G) \leq rl,$$

\item[$(iii)$]$$\displaystyle\limsup_{\tiny{\begin{array}{c} P^-(\exp(G)) \rightarrow +\infty \\ G \in \mathcal{E}_{r,l} \end{array}}}
\frac{\mathsf{D}(G)}{\exp(G)} \leq rl.$$
\end{itemize}
\end{corollary}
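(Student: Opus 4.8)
The plan is to exploit the elementary fact that every group in $\mathcal{E}_{r,l}$ embeds into a group of the shape $C^r_n$, thereby reducing all three statements to the already-established Proposition \ref{majoration C^r_n} at the cost of only vanishing error terms. First I would fix $G \simeq C_{n_1} \oplus \dots \oplus C_{n_r} \in \mathcal{E}_{r,l}$ with $\exp(G)=n_r=n$, so that $\omega(n)=l$ and $P^-(\exp(G))=P^-(n)$. Since $n_i \mid n$ for every $i \in \llbracket 1,r \rrbracket$, each cyclic factor $C_{n_i}$ is isomorphic to the unique subgroup of $C_n$ of order $n_i$, whence $G$ embeds as a subgroup of $H=C^r_n$.

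With this embedding in hand, Lemma \ref{little cross number d'un sous-groupe} $(i)$ gives $\mathsf{k}(G)+\mathsf{k}(H/G)/\exp(G) \leq \mathsf{k}(H)$; discarding the nonnegative term $\mathsf{k}(H/G)/\exp(G)$ yields $\mathsf{k}(G) \leq \mathsf{k}(C^r_n)$. Taking the $\limsup$ as $P^-(n) \to +\infty$ along $\mathcal{E}_{r,l}$ (where $\omega(n)=l$ stays fixed) and invoking Proposition \ref{majoration C^r_n} $(i)$, which asserts $\lim \mathsf{k}(C^r_n)=rl$, immediately establishes statement $(i)$.

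For $(ii)$ and $(iii)$ I would feed the bound just obtained into Proposition \ref{majoration de K et Dav par k}. Its part $(i)$ gives $\mathsf{K}(G) \leq \mathsf{k}(G)+1/P^-(n)$; since $P^-(n) \to +\infty$ the error term vanishes, so that $\limsup \mathsf{K}(G) \leq \limsup \mathsf{k}(G) \leq rl$, which is $(ii)$. Likewise, its part $(ii)$ yields $\mathsf{D}(G)/n \leq \mathsf{k}(G)+1/n$, and because $n \geq P^-(n) \to +\infty$ the term $1/n$ tends to $0$; combining with $(i)$ gives $\limsup \mathsf{D}(G)/\exp(G) \leq rl$, which is $(iii)$.

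There is no genuine obstacle here, the argument being a clean monotonicity-plus-embedding reduction rather than a new estimate. The only point deserving a moment's care is that the two error terms $1/P^-(n)$ and $1/n$ really do vanish along the limit under consideration; this is immediate once one observes that $P^-(n) \to +\infty$ forces $n \to +\infty$, so that in each inequality the $\limsup$ of the left-hand side is controlled solely by $\limsup \mathsf{k}(G)$, which has already been bounded above by $rl$.
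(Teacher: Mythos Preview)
Your proof is correct and follows essentially the same approach as the paper: embed $G$ into $H=C^r_{\exp(G)}$, apply Lemma~\ref{little cross number d'un sous-groupe} to obtain $\mathsf{k}(G)\leq\mathsf{k}(H)$, invoke Proposition~\ref{majoration C^r_n} for the limit, and then deduce $(ii)$ and $(iii)$ from Proposition~\ref{majoration de K et Dav par k}. Your write-up is simply more explicit about the vanishing error terms $1/P^-(n)$ and $1/n$.
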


\begin{proof} Every $G$ in $\mathcal{E}_{r,l}$ can be injected in the group $H \simeq C^r_{\exp(G)}$. Therefore, using
Lemma \ref{little cross number d'un sous-groupe}, we obtain
$\mathsf{k}(G) \leq \mathsf{k}(H)$ and the desired result follows
from Proposition \ref{majoration C^r_n}, applied to the group $H$.
The corresponding statements for $\mathsf{K}(.)$ and $\mathsf{D}(.)$
are then deduced from Proposition \ref{majoration de K et Dav par
k}.
\end{proof}

\begin{corollary}\label{} For all integers $r \in \mathbb{N}^*$, the two following statements hold:
\begin{itemize}
\item[$(i)$] $$\displaystyle\limsup_{\tiny{\begin{array}{c} \omega(\exp(G))
\rightarrow +\infty \\ r(G) \leq r\end{array}}}
\frac{\mathsf{k}(G)}{\omega(\exp(G))} \leq r,$$

\item[$(ii)$] $$\displaystyle\limsup_{\tiny{\begin{array}{c} \omega(\exp(G))
\rightarrow +\infty \\ r(G) \leq r\end{array}}}
\frac{\mathsf{K}(G)}{\omega(\exp(G))} \leq r.$$
\end{itemize}
\end{corollary}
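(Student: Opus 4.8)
The plan is to reduce the final corollary to the already-established Proposition~\ref{majoration C^r_n bis} by the same injection-and-quotient technique used in the proof of Corollary~\ref{majoration en fonction de l}. Fix $r \in \mathbb{N}^*$. Any finite Abelian group $G$ with $r(G) \leq r$ can be injected into $H \simeq C^r_{\exp(G)}$, since $G \simeq C_{n_1} \oplus \dots \oplus C_{r(G)}$ with each $n_i \mid \exp(G)$ and $r(G) \leq r$. First I would invoke Lemma~\ref{little cross number d'un sous-groupe}$(i)$ to get $\mathsf{k}(G) \leq \mathsf{k}(H) = \mathsf{k}(C^r_{\exp(G)})$, discarding the nonnegative correction term $\mathsf{k}(H/G)/\exp(G)$.

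The key observation is that $\omega(\exp(H)) = \omega(\exp(G))$, because $H = C^r_{\exp(G)}$ has exponent exactly $\exp(G)$. Writing $n = \exp(G)$, dividing the inequality above by $\omega(n)$ gives
\begin{equation*}
\frac{\mathsf{k}(G)}{\omega(\exp(G))} \leq \frac{\mathsf{k}(C^r_n)}{\omega(n)}.
\end{equation*}
As $G$ ranges over groups with $r(G) \leq r$ and $\omega(\exp(G)) \rightarrow +\infty$, the right-hand side ranges over the quantity whose limit is computed in Proposition~\ref{majoration C^r_n bis}$(i)$, namely $r$. Hence I would take the $\limsup$ of both sides and conclude
\begin{equation*}
\displaystyle\limsup_{\tiny{\begin{array}{c} \omega(\exp(G)) \rightarrow +\infty \\ r(G) \leq r\end{array}}} \frac{\mathsf{k}(G)}{\omega(\exp(G))} \leq \displaystyle\lim_{\tiny{\begin{array}{c} \omega(n) \rightarrow +\infty \end{array}}} \frac{\mathsf{k}(C^r_n)}{\omega(n)} = r,
\end{equation*}
which establishes statement $(i)$.

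For statement $(ii)$, I would pass from $\mathsf{k}(.)$ to $\mathsf{K}(.)$ using Proposition~\ref{majoration de K et Dav par k}$(i)$, which gives $\mathsf{K}(G) \leq \mathsf{k}(G) + 1/P^-(\exp(G)) \leq \mathsf{k}(G) + 1$. Dividing by $\omega(\exp(G))$ and noting that $1/\omega(\exp(G)) \rightarrow 0$ as $\omega(\exp(G)) \rightarrow +\infty$, the extra term vanishes in the $\limsup$, so $(ii)$ follows immediately from $(i)$.

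I do not anticipate a genuine obstacle here, as this corollary is a direct consequence of Proposition~\ref{majoration C^r_n bis} via the monotonicity of $\mathsf{k}$ under subgroup inclusion. The only point demanding mild care is that the limit in Proposition~\ref{majoration C^r_n bis} is taken over all $n$ with $\omega(n) \rightarrow +\infty$ without a rank constraint beyond $C^r_n$, whereas here $G$ varies over all groups of rank at most $r$; but since each such $G$ is dominated by $C^r_{\exp(G)}$ and the bound depends on $G$ only through $\exp(G)$, the supremum over admissible $G$ with a given exponent is controlled by the single value $\mathsf{k}(C^r_n)/\omega(n)$, so no uniformity issue arises.
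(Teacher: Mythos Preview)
Your proposal is correct and follows essentially the same approach as the paper: inject $G$ into $H\simeq C^r_{\exp(G)}$, use Lemma~\ref{little cross number d'un sous-groupe} to get $\mathsf{k}(G)\leq\mathsf{k}(H)$, apply Proposition~\ref{majoration C^r_n bis} to $H$, and then deduce the statement for $\mathsf{K}(.)$ from Proposition~\ref{majoration de K et Dav par k}$(i)$. Your write-up just spells out a few details (discarding the nonnegative correction term, the vanishing of $1/\omega(\exp(G))$) that the paper leaves implicit; note the small typo $C_{r(G)}$ which should read $C_{n_{r(G)}}$.
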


\begin{proof} Every $G$ with rank $r(G) \leq r$ can be injected in the group $H \simeq C^r_{\exp(G)}$. Since we
have $\mathsf{k}(G) \leq \mathsf{k}(H)$ by Lemma \ref{little cross
number d'un sous-groupe}, the result follows from Proposition
\ref{majoration C^r_n bis}, applied to the group $H$. The statement
for $\mathsf{K}(.)$ is then deduced from Proposition \ref{majoration
de K et Dav par k} $(i)$.
\end{proof}

\section*{Acknowledgments}
I am grateful to my Ph.D. advisor Alain Plagne for his help during
the preparation of this article. I would like also to thank Wolfgang
Schmid for his remarks on a preliminary version of this paper.


\end{document}